\documentclass[3p,times]{elsarticle}
\usepackage{bbm}
\usepackage{amsfonts}
\usepackage{pifont}
\usepackage{dashrule}
\usepackage[all,pdf]{xy}
\usepackage{mathrsfs}
\usepackage{amsmath}
\usepackage{amssymb}
\usepackage[amsmath, thmmarks]{ntheorem}
\usepackage{graphicx}
\usepackage{float}
\usepackage{tikz}
\usepackage{hyperref}
\hypersetup{
  hidelinks,
  pdftitle={A complete representation theorem for nullnorms on bounded trellises},
  pdfauthor={Zhenyu Xiu},
  pdfsubject={Representation of nullnorms on bounded trellises},
  pdfkeywords={bounded trellis, nullnorm, mixed interaction function, representation theorem}
}

\journal{Fuzzy Sets and Systems}
\biboptions{sort&compress}

\theorembodyfont{\normalfont}
\newtheorem{definition}{\bfseries Definition}[section]
\theorembodyfont{\itshape}
\newtheorem{lemma}{\bfseries Lemma}[section]
\newtheorem{theorem}{\bfseries Theorem}[section]
\theorembodyfont{\normalfont}
\newtheorem{remark}{\bfseries Remark}[section]
\theorembodyfont{\itshape}

\newtheorem{proposition}{\bfseries Proposition}[section]
\theorembodyfont{\normalfont}
\newtheorem{example}{\bfseries Example}[section]

\newcommand{\M}{\mathbf{M}}
\newcommand{\N}{\mathbf{N}}
\newcommand{\G}{\mathbf{G}}

\begin{document}
\begin{frontmatter}

\title{A complete representation theorem for  nullnorms on bounded
trellises\tnoteref{funding}}

\tnotetext[funding]{Supported by the National Natural Science Foundation
of China (Nos. 12671553 and 12271036).}
\author[gxmu]{Zhenyu Xiu}
\ead{xyz198202@163.com}
\address[gxmu]{School of Mathematical Science, Guangxi Minzu University,
Nanning 530006, Guangxi, China}

\begin{abstract}
We establish necessary and sufficient conditions under which a binary
operation on a bounded trellis is a proper nullnorm.  The
representation combines a t-conorm on the lower interval, a t-norm on the
upper interval, two order-preserving maps, and a commutative, increasing
function on $I_a^3\times I_a^3$, where $I_a^3$ consists of the elements
incomparable with the absorbing element $a$ that neither reach $a$ nor are
reachable from $a$. Unlike earlier range-restricted constructions, this
function may take values anywhere in the trellis. To preserve associativity
for such unrestricted values, we introduce a mixed interaction function that
evaluates every pair with at least one component in $I_a^3$.  We also derive the
specializations in which this region is empty or consists of a single
element, including an exact description of the admissible value in the
singleton case. A five-element lattice example shows that the mixed
associativity condition is independent of the remaining hypotheses, while
a fourteen-element nontransitive trellis example
 demonstrates the necessity of allowing the unrestricted range. Finally, the principal range-restricted
subclasses and the bounded-lattice case are recovered as specializations of
the general representation.

\end{abstract}

\begin{keyword}
bounded trellis \sep nullnorm \sep mixed interaction function \sep
representation theorem
\end{keyword}

\end{frontmatter}

\section{Introduction}\label{sec:intro}


As generalizations of triangular norms (t-norms, for short) and triangular
conorms (t-conorms, for short), nullnorms on the real unit interval were
introduced by Calvo,
De Baets and Fodor~\cite{Calvo2001}, while the closely related
t-operators were introduced by Mas, Mayor and Torrens~\cite{Mas1999}.
Mas, Mayor and Torrens~\cite{Mas2002} subsequently showed that the two
notions have the same block structure and are equivalent. Idempotency
and distributivity were investigated, for example,
in~\cite{Drygas2004,Qin2005,Drewniak2008,Drygas2015}, while general
treatments of associative aggregation can be found
in~\cite{Alsina2006,Grabisch2009}.

Kara\c{c}al, \.{I}nce and Mesiar~\cite{Karacal2015} extended nullnorms
to bounded lattices. Subsequent construction and classification methods
include~\cite{Cayli2018,Ertugrul2018,Cayli2020,Hua2022,Wu2022,
ZhangBeam2022}. Unlike a chain, a lattice can contain elements
incomparable with the absorbing element, so the behavior on these
elements is a genuine part of a representation problem.

Sun and Liu~\cite{Sun2020} obtained a representation of the class
whose values are comparable with the absorbing element. Zhang,
Ouyang, Wang and De Baets~\cite{Zhang2022} subsequently established
a complete representation on bounded lattices, using  two order-preserving maps, a triangular conorm, a triangular norm and a conditionally associative
function.  Wang, Ouyang, Zhang and De Baets~\cite{Wang2023} examined the
independence of conditions in that representation. These results
suggest separating the part forced by the boundary operations from
the part on which associativity still needs to be imposed.

Trellises replace the partial order of a lattice by a reflexive and
antisymmetric relation that need not be transitive. Classical sources
include~\cite{Fried1970,Skala1971,Gladstien1973,Skala1972}. The loss of
transitivity is mathematically substantive rather than merely formal:
cycle-transitivity of reciprocal relations is studied
in~\cite{DeBaets2006,DeBaets2015}, while intransitive indifference is
surveyed in~\cite{Fishburn1970}. For aggregation, the meet and join of
a trellis cannot, in general, serve as a t-norm and a t-conorm,
respectively. Zedam
and De Baets~\cite{Zedam2023} developed triangular norms on bounded
trellises, and Kong and Zhao~\cite{Kong2024} studied uninorms in the
same setting.
 For nullnorms, Xiu and Zheng~\cite{Xiu2025} established structural
properties and a representation of the class $\mathcal V_a$, consisting
of nullnorms whose ranges are disjoint from $I_a^1\cup I_a^2$.
Jiang, Wang and Liu~\cite{Jiang2025}
provided additional construction methods and investigated restrictions
on the absorbing element.

The representation theorem established by Xiu and
Zheng~\cite{Xiu2025} provides a natural starting point for the present
study. It characterizes the subclass $\mathcal{V}_a$ using t-norms,
t-conorms, two order-preserving maps, and a commutative and
increasing function $H$. A key assumption underlying this
representation is that the range of $H$ excludes values in
$I_a^1\cup I_a^2$. Although this restriction makes the resulting
decomposition tractable, it is not implied by the defining axioms of
nullnorms; rather, it is an additional condition defining
$\mathcal{V}_a$.

For a general proper nullnorm, however, the value associated with a
pair $(x,y)\in I_a^3\times I_a^3$ may lie in
$I_a^1\cup I_a^2$. Removing the range restriction therefore creates a
domain mismatch: if $H(x,y)\notin I_a^3$, then the iterated expression
$H(H(x,y),z)$ is undefined because $H$ is defined only on
$I_a^3\times I_a^3$, even though the corresponding associativity
condition for the global operation must still be satisfied.

To address this mismatch, we allow
$
H:I_a^3\times I_a^3\rightarrow X
$
to take any value permitted by the stated compatibility conditions and
introduce a mixed interaction function $\Lambda$ on
$
(X\times I_a^3)\cup(I_a^3\times X).
$
The function $\Lambda$ evaluates each intermediate value produced by
$H$ through the appropriate branch of the global construction, thereby
making the iterated expressions required by associativity well defined.
The resulting construction theorem and its converse together yield a
complete representation of proper nullnorms with absorbing element $a$
and recover the representation of Xiu and Zheng~\cite{Xiu2025} as the
special case obtained by imposing the corresponding range restriction.

The remainder of the paper is organized as follows.
Section~\ref{sec:prelim} introduces the regional decomposition and the
preliminary results used later. Section~\ref{sec:main} establishes the
construction and complete representation theorems and presents two
illustrative examples. Section~\ref{sec:specializations} treats the
empty and singleton cases of $I_a^3$, discusses the range-restricted
subclasses, and considers the bounded-lattice specialization.
Section~\ref{sec:conclusion} concludes the paper.

\section{Preliminaries}\label{sec:prelim}

A \emph{pseudo-order} $\unlhd$ on a set $X$ is a reflexive and
antisymmetric binary relation. Write $x\lhd y$ if $x\unlhd y$
and $x\ne y$, and $x\parallel y$ if neither comparison holds. A set
equipped with a pseudo-order is called a \emph{pseudo-ordered set}
(\emph{psoset}, for short) and is denoted by $\mathbb P=(X,\unlhd)$.
The set
of all elements of $X$ that are incomparable with $a$ is denoted by
$I_a$; that is, 
$
I_a=\{x\in X\mid x\parallel a\}.
$ 
Let $\mathcal C\subseteq X$, let $x,y\in X$, and let $z,t\in\mathcal C$.
The relation $x\lesssim y$ means that there is a finite sequence
$(x_1,\ldots,x_n)$ in $X$ such that
$x\unlhd x_1\unlhd\cdots\unlhd x_n\unlhd y$. Similarly,
$z\lesssim_{\mathcal C}t$ means that such a sequence can be chosen in
$\mathcal C$. The set $\mathcal C$ is called a \emph{cycle} if both
$z\lesssim_{\mathcal C}t$ and $t\lesssim_{\mathcal C}z$ hold for all
$z,t\in\mathcal C$. By antisymmetry, every nontrivial cycle contains at
least three elements.
A \emph{trellis} is a pseudo-ordered set in which every pair has a
greatest lower bound and a least upper bound, denoted by $x\wedge y$
and $x\vee y$, respectively. It is \emph{bounded} if it has elements
$0$ and $1$ with $0\unlhd x\unlhd 1$ for every $x\in X$.
An element $a\in X$ is called  \emph{middle-transitive} if
$x\unlhd a\unlhd y$ implies $x\unlhd y$ for all $x,y\in X$; the set
of such elements is denoted by $X^{mtr}$.
These conventions follow the classical trellis literature
\cite{Fried1970,Skala1971,Gladstien1973,Skala1972}.
Throughout, $\mathbb P=(X,\unlhd,\wedge,\vee,0,1)$ is a bounded
trellis and $[b,d]=\{x:b\unlhd x\unlhd d\}$ whenever $b\unlhd d$.
Open and half-open intervals have the corresponding strict endpoint
conditions. All interval relations are restrictions of $\unlhd$.
For $a\in X$, define
\[
\begin{aligned}
I_a^1&=\{x\in I_a\mid x\lesssim a\},\\
I_a^2&=\{x\in I_a\mid a\lesssim x\},\\
I_a^3&=\{x\in I_a\mid x\not\lesssim a\text{ and }a\not\lesssim x\},  \\
K&=\bigl\{a\in X^{mtr}\mid \text{there are no }y\in
\mathopen{]}0,a\mathclose{[}\cup I_a^1\text{ and }z\in
\mathopen{]}a,1\mathclose{[}\cup I_a^2\text{ such that }z\unlhd y\bigr\}.
\end{aligned}
\]
These sets follow the notation of~\cite{Kong2024}.
For a function $F:D\to X$ and a subset $A\subseteq D$, write
$F(A)=\{F(u)\mid u\in A\}$.
For pairs, we use the componentwise relation
\[
(x,z)\preccurlyeq(y,t)\quad\Leftrightarrow \quad
x\unlhd y\quad\text{and}\quad z\unlhd t.
\]


A subset $A\subseteq X$ is called a \emph{lower set} if
$x\unlhd y$ and $y\in A$ imply $x\in A$. Dually, $A$ is called an
\emph{upper set} if $x\in A$ and $x\unlhd y$ imply $y\in A$.
 A map $q:D\to X$, where $D\subseteq X$, is called
\emph{order-preserving} if $x\unlhd y$ implies
$q(x)\unlhd q(y)$ for all $x,y\in D$.
Our terminology and notation for order-theoretic and lattice-theoretic
concepts follow~\cite{Gierz2003}.
 A subset
$\mathcal D\subseteq X^2$ is called \emph{symmetric} if
$(x,y)\in\mathcal D$ implies $(y,x)\in\mathcal D$ \cite{Zhang2022}.
Let $\mathcal D\subseteq X^2$ be symmetric.
A function
$F:\mathcal D\to X$ is called \emph{commutative} if
$F(x,y)=F(y,x)$ for every $(x,y)\in\mathcal D$, and it is called
\emph{increasing} if
$
F(x,z)\unlhd F(y,t)
$
whenever $(x,z),(y,t)\in\mathcal D$ and
$(x,z)\preccurlyeq(y,t)$.



\begin{definition}[\cite{Zedam2023}]
Let $\mathbb P=(X,\unlhd)$ be a psoset. A binary operation $F$ on
$\mathbb P$ is called
\begin{enumerate}
\item[$\mathrm{(i)}$] \emph{commutative} if
$F(x,y)=F(y,x)$ for all $x,y\in X$;
\item[$\mathrm{(ii)}$] \emph{associative} if
$F(x,F(y,z))=F(F(x,y),z)$ for all $x,y,z\in X$;
\item[$\mathrm{(iii)}$] \emph{increasing} if $x\unlhd y$ and
$z\unlhd t$ imply $F(x,z)\unlhd F(y,t)$ for all $x,y,z,t\in X$.
\end{enumerate}
\end{definition}



\begin{definition}[\cite{Zedam2023}]
Let $\mathbb P=(X,\unlhd,0,1)$ be a bounded psoset. A binary
operation $T:X^2\to X$ (respectively, $S:X^2\to X$) is called a
\emph{t-norm} (respectively, a \emph{t-conorm}) on $\mathbb P$ if it
is commutative, associative, and increasing and has neutral element
$1$ (respectively, $0$). Thus,
$
T(x,1)=x
\
\text{(respectively, $S(x,0)=x$)}
$
for every $x\in X$.
\end{definition}

\begin{definition}[\cite{Xiu2025}]
A \emph{nullnorm} with absorbing element $a$ is a commutative,
associative, increasing operation $V:X^2\to X$ satisfying
\begin{equation}\label{eq:boundary}
V(x,0)=x\quad(x\unlhd a),\qquad
V(x,1)=x\quad(a\unlhd x).
\end{equation}
It is \emph{proper} if $a\notin\{0,1\}$.
\end{definition}

The boundary identities and monotonicity imply that $a$ is indeed
absorbing:
$
a=V(a,0)\unlhd V(a,x)\unlhd V(a,1)=a.
$
Hence $V(a,x)=a$ for every $x\in X$.
If $b$ is another absorbing element, then $a=V(a,b)=b$, proving
uniqueness. The endpoint cases $a=0$ and
$a=1$ are precisely t-norms and t-conorms, respectively; we focus
on the proper case. Further results on nullnorms on bounded trellises
can be found in~\cite{Xiu2025}.

\begin{proposition}[\cite{Xiu2025}]\label{propp14}
Let $\mathbb{P}=(X,\unlhd,\wedge,\vee,0,1)$ be a bounded trellis and $V$ be a nullnorm on $\mathbb{P}$ with the absorbing
element $a$.  Then $a\in K$.
\end{proposition}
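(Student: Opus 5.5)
We must show two things: $a$ is middle-transitive, and there are no $y\in\mathopen{]}0,a\mathclose{[}\cup I_a^1$ and $z\in\mathopen{]}a,1\mathclose{[}\cup I_a^2$ with $z\unlhd y$. The common tool is the pair of identities $V(x,0)=x$ for $x\unlhd a$ and $V(x,1)=x$ for $a\unlhd x$, combined with monotonicity and the absorbing property $V(a,x)=a$ already derived above.

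\emph{Middle-transitivity.} Let $x\unlhd a\unlhd y$. Then
\[
x=V(x,0)\unlhd V(y,0).
\]
Also $V(y,0)\unlhd V(y,a)=a$, and $a=V(a,0)\unlhd V(y,0)$, so antisymmetry gives $V(y,0)=a$. That only yields $x\unlhd a$ again, so a better pairing is needed. Instead, use $V(x,1)$. We have $V(x,1)\unlhd V(a,1)=a$ and $x=V(x,0)\unlhd V(x,1)$. Then apply monotonicity in both arguments with $x\unlhd y$-type witnesses. The clean route is $x=V(x,0)\unlhd V(y,0)$ if $x\unlhd y$ were known, which is circular.

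So I will argue directly from associativity. The relations $x=V(x,0)$ and $y=V(y,1)$ give
\[
V(x,y)\unlhd V(a,y)=a \quad\text{and}\quad V(x,y)\unlhd V(x,a)=a,
\]
and I will try to show $V(x,y)$ is squeezed between $x$ and $y$ via $x=V(x,0)\unlhd V(x,y)\unlhd V(1,y)=y$. Monotonicity gives both inequalities, since $0\unlhd y$ and $x\unlhd 1$. Hence $x\unlhd V(x,y)\unlhd y$. Transitivity is not available, so the expected key step is to identify $V(x,y)$ with an endpoint.

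Using associativity, $V(x,y)=V(V(x,0),V(y,1))$. Moreover $V(0,1)$ satisfies $V(0,1)=V(0,V(a,\cdot))$-type computations, and $V(0,1)=a$ by the squeeze $a=V(a,0)\unlhd V(1,0)$, $V(0,1)\unlhd V(a,1)=a$. Hence
\[
V(x,y)=V(x,V(0,y))\quad\text{or}\quad V(x,y)=V(V(x,1),y).
\]
I expect to show $V(x,y)=x$ using $V(x,y)\unlhd V(x,a)$ together with $V(x,a)=a$ and $x\unlhd V(x,y)$, possibly in the form $V(x,y)=V(V(x,0),y)=V(x,V(0,y))$. Since $V(0,y)\unlhd V(0,1)=a$ and $V(0,y)\unlhd y$, we have $V(0,y)\unlhd a$, so $V(x,V(0,y))$ can be compared with $V(x,0)=x$. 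Once $V(x,y)=x$ (or $=y$) is obtained, the relation $x\unlhd V(x,y)\unlhd y$ collapses to $x\unlhd y$. \textbf{This endpoint identification is the main obstacle.}

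\emph{The forbidden configuration.} Suppose $z\unlhd y$ with $y\in\mathopen{]}0,a\mathclose{[}\cup I_a^1$ and $z\in\mathopen{]}a,1\mathclose{[}\cup I_a^2$. The same squeezing gives $z=V(z,1)$ when $a\unlhd z$ and $y=V(y,0)$ when $y\unlhd a$. For the $I_a^1$ and $I_a^2$ cases, I would first propagate the boundary identities along the chains witnessing $y\lesssim a$ and $a\lesssim z$. Concretely, $V(y,0)\unlhd V(y',0)$ along each step, ending at $a$, so every term $V(u,0)$ lies below the chain up to $a$. From this I would derive $V(z,0)=a$ and $V(y,1)=a$. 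Then
\[
V(z,0)\unlhd V(y,0) \quad\text{and}\quad V(z,1)\unlhd V(y,1),
\]
and the aim is to get $y=a$ or $z=a$, or $y\in\{0\}$, contradicting membership in the open intervals or in $I_a$. Which of these holds, and by which route, I would settle by cases as in the first part.
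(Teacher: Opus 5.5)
\paragraph{Part 1: middle-transitivity.} Your argument does not close, and the step you flag as the main obstacle cannot succeed. For $x\unlhd a\unlhd y$ you have $V(x,y)\unlhd V(a,y)=a$ and $a=V(x,a)\unlhd V(x,y)$, so $V(x,y)=a$. Your inequality $V(x,y)\unlhd V(x,a)$ points the wrong way, since $a\unlhd y$, not $y\unlhd a$. Identifying $V(x,y)$ with $x$ or $y$ would therefore force $x=a$ or $y=a$. Thus $x\unlhd V(x,y)\unlhd y$ only reproduces $x\unlhd a\unlhd y$, and passing from it to $x\unlhd y$ is exactly the transitivity you are trying to prove. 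The detours through $V(0,y)$ and associativity lead back to the same value $a$. The missing observation is that increasingness compares both arguments at once. Since $x\unlhd 1$ and $0\unlhd y$, you have $(x,0)\preccurlyeq(1,y)$, hence
\[
x=V(x,0)\unlhd V(1,y)=V(y,1)=y
\]
in a single step, with no intermediate element.

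\paragraph{Part 2: the forbidden configuration.} Propagating along chains does give $V(u,1)=a$ whenever $u\lesssim a$, and $V(u,0)=a$ whenever $a\lesssim u$. At each step use $a=V(u,a)\unlhd V(u,1)$, respectively $V(u,0)\unlhd V(u,a)=a$. Your two inequalities then force $V(y,0)=a$ and $V(z,1)=a$. This settles the cases $y\in\mathopen{]}0,a\mathclose{[}$, where $V(y,0)=y$, and $z\in\mathopen{]}a,1\mathclose{[}$, where $V(z,1)=z$. It does not settle the case $y\in I_a^1$, $z\in I_a^2$: there all four values $V(y,0),V(y,1),V(z,0),V(z,1)$ equal $a$, and no contradiction follows from what you have written.

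What is needed is that $a$ lies on no nontrivial cycle. Choose a chain $a=w_0\unlhd w_1\unlhd\cdots\unlhd w_{k+1}=z$. Because $z\unlhd y\lesssim a$, every $w_i$ satisfies $w_i\lesssim a$, so $V(w_i,1)=a$. If $w_{i-1}=a$, then $a\unlhd w_i$, and the boundary identity gives $V(w_i,1)=w_i$, whence $w_i=a$. By induction $z=a$, which contradicts $z\in\mathopen{]}a,1\mathclose{[}\cup I_a^2$. This argument in fact disposes of all four cases at once.
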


\begin{definition}[\cite{Xiu2025}]
Let $\mathbb{P}=(X,\unlhd,\wedge,\vee,0,1)$ be a bounded trellis, and
let $V$ be a nullnorm on $\mathbb{P}$ with absorbing element
$a\in K\setminus\{0,1\}$.
\begin{itemize}
\item[$\mathrm{(1)}$]   $V\in\mathcal V_{cwa}$ if $V(x,y)$ is
comparable with $a$ for all $x,y\in X$.
\item[$\mathrm{(2)}$]  $V\in\mathcal V_a$ if
$V(x,y)\notin I_a^1\cup I_a^2$ for all $x,y\in X$.
\end{itemize}
\end{definition}

The following result is implicit in the proof of Theorem~4.1
in~\cite{Xiu2025}.
\begin{proposition}
\label{prop:functions}
Suppose that $f:X\to[a,1]$ and $g:X\to[0,a]$ are order-preserving,
$f(x)=x$ for $x\in[a,1]$, and $g(x)=x$ for $x\in[0,a]$.
Then
\begin{equation}\label{eq:collapse}
f(x)=a\quad(x\in[0,a]\cup I_a^1),\qquad
g(x)=a\quad(x\in[a,1]\cup I_a^2).
\end{equation}
Moreover, for any \( x, y \in X \setminus I_a \),
\begin{equation}\label{eq7}
f(x)=f(y)\ \text{and}\ g(x)=g(y)
\quad\Leftrightarrow \quad x=y.
\end{equation}
\end{proposition}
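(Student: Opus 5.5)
The plan is to prove both assertions directly from antisymmetry, never using transitivity, since $\unlhd$ is only a pseudo-order. The first claim will come from a backward induction along the chain that witnesses $x\lesssim a$. The second claim will then follow from a short case split using the first.

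\emph{Step 1: $f(x)=a$ for $x\in[0,a]$.} If $x\unlhd a$, then monotonicity and $f(a)=a$ give $f(x)\unlhd f(a)=a$. Since $f(x)\in[a,1]$, we also have $a\unlhd f(x)$. Antisymmetry yields $f(x)=a$.

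\emph{Step 2: $f(x)=a$ for $x\in I_a^1$.} By definition there is a finite sequence with $x\unlhd x_1\unlhd\cdots\unlhd x_n\unlhd a$. Set $x_0=x$ and $x_{n+1}=a$. We show $f(x_k)=a$ by downward induction on $k$.
\begin{itemize}
\item For $k=n+1$ this is $f(a)=a$.
\item If $f(x_{k+1})=a$, then $x_k\unlhd x_{k+1}$ gives $f(x_k)\unlhd f(x_{k+1})=a$. Because $f(x_k)\in[a,1]$, also $a\unlhd f(x_k)$, so $f(x_k)=a$ by antisymmetry.
\end{itemize}
Each step compares only two adjacent elements with $a$. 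This is the one point needing care: we cannot collapse the chain into $x\unlhd a$, because transitivity may fail. The induction avoids this. The statements for $g$ on $[a,1]\cup I_a^2$ are exact duals, using the chain $a\unlhd x_1\unlhd\cdots\unlhd x$ and inducting forward from $g(a)=a$.

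\emph{Step 3: the equivalence \eqref{eq7}.} The direction $x=y\Rightarrow$ equal values is trivial. For the converse, note that every element of $X\setminus I_a$ is comparable with $a$, so it lies in $[0,a]\cup[a,1]$. We split into cases.
\begin{itemize}
\item If $x,y\in[a,1]$, then $x=f(x)=f(y)=y$.
\item If $x,y\in[0,a]$, then $x=g(x)=g(y)=y$.
\item In the mixed case, say $x\unlhd a\unlhd y$, Step 1 gives $f(x)=a$, so $y=f(y)=f(x)=a$. Dually, $g(y)=a$ (since $y\in[a,1]$), so $x=g(x)=g(y)=a$. Hence $x=y=a$.
\end{itemize}
This completes the plan. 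The only real obstacle, the lack of transitivity, is handled by the stepwise antisymmetry argument in Step 2.
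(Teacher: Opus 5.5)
Your proof is correct and complete. The paper does not prove this proposition; it only says the result is implicit in the proof of Theorem~4.1 of Xiu and Zheng. Your argument therefore supplies the justification the paper leaves out.

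The one delicate point is the case $x\in I_a^1\cup I_a^2$. You handle it correctly by propagating $f(x_k)=a$ (dually, $g(x_k)=a$) one link at a time along the reachability chain. Each link uses antisymmetry against the range bound $a\unlhd f(\cdot)$ (dually, $g(\cdot)\unlhd a$). This avoids collapsing $x\lesssim a$ to $x\unlhd a$, which would need transitivity.

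Your case split for \eqref{eq7} is also sound. It needs only that elements outside $I_a$ lie in $[0,a]\cup[a,1]$, and in the mixed case the two halves of \eqref{eq:collapse} force $x=y=a$. Finally, your argument uses neither $a\in K$ nor middle-transitivity of $a$, which matches the hypotheses as the proposition states them.
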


\section{Construction and Complete Representation}\label{sec:main}
\begin{definition}
Let $\mathbb P=(X,\unlhd,\wedge,\vee,0,1)$ be a bounded trellis
and fix $a\in X\setminus\{0,1\}$. Denote by
$\mathcal V$ the class of all proper nullnorms on
$\mathbb P$ with absorbing element $a$, and define
$
\mathcal V^{123}
=\bigl\{V\in\mathcal V\mid
V(I_a^3\times I_a^3)
\subseteq I_a^1\cup I_a^2\cup I_a^3\bigr\}.
$
\end{definition}

\begin{lemma}\label{lem:regional-sets123}
Let $a\in K\setminus\{0,1\}$ and set
$
A_a=[0,a]\cup I_a^1,
\
B_a=[a,1]\cup I_a^2.
$
Then
$
I_a=I_a^1\mathbin{\cup}I_a^2\mathbin{\cup}I_a^3,
\
A_a\cap B_a=\{a\}.
$
Moreover, $A_a$ is a lower set and $B_a$ is an upper set.
Consequently, $(A_a\times X)\cup(X\times A_a)$ is a lower subset of
$X^2$, whereas $(B_a\times X)\cup(X\times B_a)$ is an upper subset
of $X^2$ with respect to $\preccurlyeq$.
\end{lemma}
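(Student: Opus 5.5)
The plan is to prove the four assertions separately. The first two are set-theoretic and use antisymmetry and the definition of $K$. The lower/upper set property is a case analysis driven by the condition in $K$. The statement about $X^2$ then follows componentwise.

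\emph{Decomposition of $I_a$.} For $x\in I_a$, exactly one of three things happens: $x\lesssim a$, or $a\lesssim x$, or neither. So $x$ lies in $I_a^1$, $I_a^2$ or $I_a^3$, and the reverse inclusion is trivial. Disjointness is not claimed, so nothing more is needed here.

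\emph{Intersection $A_a\cap B_a=\{a\}$.} Elements of $[0,a]\cup[a,1]$ are comparable with $a$, whereas elements of $I_a$ are not. Hence the only intersections to check are $[0,a]\cap[a,1]$ and $I_a^1\cap I_a^2$.
\begin{itemize}
\item For $[0,a]\cap[a,1]$, antisymmetry gives $\{a\}$.
\item Suppose $x\in I_a^1\cap I_a^2$. Take $y=z=x$. Then $y\in I_a^1$, $z\in I_a^2$ and $z\unlhd y$ by reflexivity, which contradicts $a\in K$.
\end{itemize}

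\emph{$A_a$ is a lower set.} Let $x\unlhd y$ with $y\in A_a$. We may assume $x\neq y$, and $y\neq 0$, since $x\unlhd 0$ forces $x=0$. In every case $x\lesssim a$: either $x\unlhd y\unlhd a$, or $x\unlhd y\lesssim a$ when $y\in I_a^1$. If $x\parallel a$, this gives $x\in I_a^1$. If $x\unlhd a$, then $x\in[0,a]$. The remaining case $a\lhd x$ is the one to exclude.
\begin{itemize}
\item If $x=1$, then $1\unlhd y$ forces $y=1$. This is impossible, since $y\in A_a$ and $A_a\cap B_a=\{a\}$ would give $y=a\neq 1$.
\item If $y=a$, then $a\unlhd x\unlhd a$ gives $x=a$.
\item Otherwise $x\in\mathopen{]}a,1\mathclose{[}$ and $y\in\mathopen{]}0,a\mathclose{[}\cup I_a^1$ with $x\unlhd y$. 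This is exactly the configuration forbidden by the definition of $K$, taking $z=x$.
\end{itemize}
The upper-set property of $B_a$ is dual: swap the roles of $0,1$, of $I_a^1,I_a^2$, and reverse $\unlhd$. The condition defining $K$ is self-dual in the required sense.

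\emph{Consequences in $X^2$.} Suppose $(x,z)\preccurlyeq(y,t)$ and $(y,t)\in(A_a\times X)\cup(X\times A_a)$. Then $y\in A_a$ or $t\in A_a$. Since $x\unlhd y$ and $z\unlhd t$, the lower-set property gives $x\in A_a$ or $z\in A_a$, so $(x,z)$ lies in the same set. The upper-set statement for $B_a$ is dual.

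The main obstacle is the exclusion of the case $a\lhd x$ in the lower-set argument. Transitivity is unavailable, so this case must be handled directly through the $K$-condition, and the degenerate endpoints $x=1$, $y=a$ and $y=0$ must be peeled off first so that $x$ and $y$ lie in the open intervals that the $K$-condition requires.
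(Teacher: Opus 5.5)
Your proof is correct and follows the paper's argument. You show $I_a^1\cap I_a^2=\varnothing$ by taking $y=z=x$ in the $K$-condition, exclude the case $a\lhd x$ in the lower-set argument by the $K$-condition, and then use duality and a componentwise step. Your explicit handling of the endpoint cases $x=1$, $y=0$ and $y=a$ is more careful than the paper, which passes directly to $u\in\mathopen{]}a,1\mathclose{[}$ and $v\in\mathopen{]}0,a\mathclose{[}\cup I_a^1$.
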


\begin{proof}
If an element belonged to both $I_a^1$ and $I_a^2$, it could be used
simultaneously as the elements $y$ and $z$ in the forbidden comparison
in the definition of $K$. Hence $I_a^1\cap I_a^2=\varnothing$, and the
first two assertions follow from the definitions.

Suppose that $v\in A_a$ and $u\unlhd v$. Adjoining the comparison
$u\unlhd v$ to a reachability chain from $v$ to $a$ shows that
$u\lesssim a$. If $u\parallel a$, then $u\in I_a^1$; if
$u\unlhd a$, then $u\in[0,a]$. The only remaining possibility is
$a\lhd u$. In that case $v\ne a$ by antisymmetry, so
$u\in\mathopen{]}a,1\mathclose{[}$ and
$v\in\mathopen{]}0,a\mathclose{[}\cup I_a^1$, contradicting the
definition of $K$. Therefore $u\in A_a$, and $A_a$ is a lower set.
The proof that $B_a$ is an upper set is dual. The assertions about the
two subsets of $X^2$ follow immediately.
\end{proof}

We now state the main theorem, which gives a complete characterization of   all proper nullnorms  on a bounded trellis.

%

First, define
\[
\M = \big(([0,a]\cup I_a^1)\times X\big)\cup\big(X\times([0,a]\cup I_a^1)\big),
\]
\[
\N = \big(([a,1]\cup I_a^2)\times X\big)\cup\big(X\times([a,1]\cup I_a^2)\big),
\qquad \G = I_a^3\times I_a^3,
\]
and set
\[
\mathcal L_a=(X\times I_a^3)\cup(I_a^3\times X).
\]
By Lemma \ref{lem:regional-sets123},    if $a\in K\setminus\{0,1\}$, then  $\M$ is a lower subset of
$X^2$, whereas $\N$ is an upper subset
of $X^2$ with respect to $\preccurlyeq$.


 Given a t-conorm $S:[0,a]^2\to[0,a]$, a t-norm
$T:[a,1]^2\to[a,1]$, order-preserving maps
$f:X\to[a,1]$ and $g:X\to[0,a]$, and a commutative, increasing
function $H:\mathbf G\to X$, define the mixed interaction function
$\Lambda:\mathcal L_a\to X$ as follows:
\begin{equation}\label{eq1}
\Lambda(u,v)=
\begin{cases}
S(g(u),g(v)), & (u,v)\in\M\cap\mathcal L_a,\\
T(f(u),f(v)), & (u,v)\in\N\cap\mathcal L_a,\\
H(u,v), & (u,v)\in\G.
\end{cases}
\end{equation}
Every pair in $\mathcal L_a$ belongs to at least one of the three
displayed regions. Whenever condition~\textup{(C1)} below holds,
formula~\eqref{eq1} defines a well-defined symmetric function
$\Lambda:\mathcal L_a\to X$. Indeed, $\G$ is disjoint from
$\M\cup\N$, whereas the first two expressions are both equal to $a$
on $\M\cap\N\cap\mathcal L_a$ by
Proposition~\ref{prop:functions}. Moreover, $\mathcal L_a$ and all
three regions are symmetric, so commutativity of $S$, $T$, and $H$
makes $\Lambda$ symmetric on its domain.

\begin{theorem}\label{thm:construction}
Let $\mathbb{P}$ be a bounded trellis and $a\in X\setminus\{0,1\}$.
Let $S:[0,a]^2\to[0,a]$ be a t-conorm,
$T:[a,1]^2\to[a,1]$ a t-norm, $f:X\to[a,1]$ and
$g:X\to[0,a]$ order-preserving maps, and $H:\G\to X$ a commutative
and increasing function. Let $\Lambda:\mathcal L_a\to X$ be the mixed
interaction function given by~\eqref{eq1}. Assume that the
following conditions hold whenever the displayed variables belong to
the indicated domains:
\begin{itemize}
\item[(C1)] $f(x)=x$ for $x\in[a,1]$, and $g(x)=x$ for $x\in[0,a]$;
\item[(C2)] For all $(x,y)\in\G$,
$
S(g(x),g(y)) = g(H(x,y)),\qquad T(f(x),f(y)) = f(H(x,y));
$
\item[(C3)] If $(x,z)\in\M$, $(y,t)\in\G$, and
$(x,z)\preccurlyeq(y,t)$, then
$
S(g(x),g(z))\unlhd H(y,t);
$
\item[(C4)] If $(x,z)\in\G$, $(y,t)\in\N$, and
$(x,z)\preccurlyeq(y,t)$, then
$
H(x,z)\unlhd T(f(y),f(t));
$
\item[(C5)] For all $x,y,z\in I_a^3$,
$
\Lambda(H(x,y),z) = \Lambda(x,H(y,z)).
$
\end{itemize}
Then $a\in K\setminus\{0,1\}$
if and only if the binary operation
$V:X\times X\to X$ defined by
\begin{equation}\label{eq2}
V(x,y) =
\begin{cases}
S(g(x),g(y)), & (x,y)\in\M,\\
T(f(x),f(y)), & (x,y)\in\N,\\
H(x,y), & (x,y)\in\G.
\end{cases} 
\end{equation}
is a proper nullnorm on $\mathbb{P}$ with absorbing element $a$.
%

\end{theorem}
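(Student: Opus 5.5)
The necessity direction is immediate: if $V$ is a proper nullnorm with absorbing element $a$, then Proposition~\ref{propp14} gives $a\in K$, and properness gives $a\notin\{0,1\}$. The substance lies in sufficiency. Assume $a\in K\setminus\{0,1\}$. By Lemma~\ref{lem:regional-sets123}, $X$ is the disjoint union of $A_a\setminus\{a\}$, $\{a\}$, $B_a\setminus\{a\}$ and $I_a^3$. Hence $X^2=\M\cup\N\cup\G$, with $\G$ disjoint from $\M\cup\N$. On $\M\cap\N$ both formulas give $a$ by Proposition~\ref{prop:functions}: a pair in $\M\cap\N$ has one coordinate in $A_a$ and one in $B_a$ (or both in one of them, which meets at $a$), so one of the $g$-values or $f$-values equals $a$, and $S(\cdot,a)=a=T(\cdot,a)$ by the boundary behaviour of t-(co)norms. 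Thus $V$ is well defined. Commutativity follows from the symmetry of the three regions and the commutativity of $S$, $T$, $H$. The boundary condition~\eqref{eq:boundary} is checked directly: $(x,0)\in\M$ for all $x$, and for $x\unlhd a$ we get $S(g(x),g(0))=S(x,0)=x$ by (C1). Dually, $(x,1)\in\N$ for $a\unlhd x$.

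For monotonicity, take $(x,z)\preccurlyeq(y,t)$. If both pairs lie in the same region, use monotonicity of $S$, $g$, of $T$, $f$, or of $H$. Since $\M$ is a lower set and $\N$ an upper set, the mixed cases are limited to: $\M$ below $\G$ (this is (C3)), $\G$ below $\N$ (this is (C4)), and $\M$ below $\N$. In the last case $S(g(x),g(z))\unlhd a\unlhd T(f(y),f(t))$, and we need to conclude the two ends are comparable despite nontransitivity. Here I would use $a\in X^{mtr}\subseteq K$, so middle-transitivity at $a$ yields the comparison.

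Associativity is the main obstacle. The key observation I would establish first is that $V$ restricted to $\mathcal L_a$ coincides with $\Lambda$, by comparing~\eqref{eq1} with~\eqref{eq2}. Then, for $x,y,z\in X$, I would split by how many of the three lie in $I_a^3$.

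If all three lie in $I_a^3$, then $V(V(x,y),z)=\Lambda(H(x,y),z)$, since $(H(x,y),z)\in\mathcal L_a$, and similarly on the right; so (C5) is exactly associativity.

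If none lies in $I_a^3$, every relevant pair is in $\M\cup\N$. I would use the identities $g(S(g(u),g(v)))=S(g(u),g(v))$ and $f(T(f(u),f(v)))=T(f(u),f(v))$ from (C1), together with the collapse~\eqref{eq:collapse}. For example, if $S(g(x),g(y))$ is fed into $T\circ f$, its $f$-value is $a$ and the result is $a$. Then each side reduces either to $S(g(x),g(y),g(z))$, to $T(f(x),f(y),f(z))$, or to $a$. Which case applies is decided by whether some argument lies in $A_a$ and some in $B_a$, and that decision is symmetric in the order of bracketing.

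In the mixed cases, where one or two variables lie in $I_a^3$, (C2) is the tool. If $(x,y)\in\G$ and $z\in A_a$, then $V(H(x,y),z)=S(g(H(x,y)),g(z))=S(S(g(x),g(y)),g(z))$, which matches the other bracketing $S(g(x),S(g(y),g(z)))$ after reapplying (C1). The same holds dually with $T$, $f$ and $z\in B_a$. When a middle value lands in $I_a^3$, I would again invoke $V|_{\mathcal L_a}=\Lambda$ and then unpack $\Lambda$ by its region. I expect this bookkeeping, in particular making sure every intermediate pair is correctly classified when $H$ takes values in $I_a^1\cup I_a^2$, to be the step requiring the most care.
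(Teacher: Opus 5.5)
Your proposal is correct. Well-definedness, the boundary conditions, commutativity, the monotonicity case split, and the all-in-$I_a^3$ case of associativity (via $V|_{\mathcal L_a}=\Lambda$ and (C5)) match the paper.

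Two small slips need fixing:
\begin{itemize}
\item The inclusion is $K\subseteq X^{mtr}$, not $X^{mtr}\subseteq K$.
\item On $\M\cap\N$ you need that one $g$-value \emph{and} one $f$-value equal $a$. Both hold, since some coordinate lies in $A_a$ and some in $B_a$.
\end{itemize}

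You genuinely differ from the paper in the remaining associativity cases. You compute both bracketings configuration by configuration. You reduce each to an iterated $S$-expression, an iterated $T$-expression, or $a$, using (C1), (C2) and \eqref{eq:collapse}.

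The paper proceeds uniformly instead:
\begin{itemize}
\item It proves the identities \eqref{eq3} once, for all $(u,v)\in X^2$.
\item It observes that unless $x,y,z\in I_a^3$, neither outer pair lies in $\G$, so both iterated values lie in $[0,a]\cup[a,1]$.
\item It applies $f$ and $g$ to both sides and uses associativity of $S$ and $T$.
\item It concludes by the separation property \eqref{eq7}.
\end{itemize}
The paper's route is short and avoids enumerating the configurations of $x,y,z$ among $A_a$, $B_a$, $I_a^3$, at the price of invoking \eqref{eq7}. Yours is more elementary and yields the actual value of each triple product. However, the enumeration, which you only sketch, still has to be carried out.

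The same observation about outer pairs removes the worry in your last sentence. A middle value $H(x,y)$ occurs only when $x,y\in I_a^3$, and then $z\notin I_a^3$. So $(H(x,y),z)\in\M\cup\N$ is evaluated by the branch selected by $z$, whether $H(x,y)$ lies in $I_a^1$, $I_a^2$, $I_a^3$ or elsewhere. The other middle value $V(y,z)$ already lies in $[0,a]\cup[a,1]$. Hence no pair in $\G$ ever has to be evaluated outside the all-in-$I_a^3$ case.
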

\begin{proof}
Suppose first that $a\in K\setminus\{0,1\}$, and define $V$
by~\eqref{eq2}.

\textit{Well-definedness.}
Since
$
X=([0,a]\cup I_a^1)\cup([a,1]\cup I_a^2)\cup I_a^3,
$
the sets $\M$, $\N$, and $\G$ cover $X^2$.
If $(x,y)\in\M\cap\N$, at least one coordinate belongs to
$[0,a]\cup I_a^1$ and at least one coordinate belongs to
$[a,1]\cup I_a^2$. By Proposition~\ref{prop:functions}, one of $f(x)$ and
$f(y)$ equals $a$, and one of $g(x)$ and $g(y)$ equals $a$. Therefore
$S(g(x),g(y))=a=T(f(x),f(y))$, so the two formulas agree.
Comparing~\eqref{eq1} and~\eqref{eq2} now gives
$V|_{\mathcal L_a}=\Lambda$.

\textit{Absorbing element.}
For $x\unlhd a$, we have $x\in[0,a]$ and hence
\[
V(x,0)=S(g(x),g(0))=S(x,0)=x.
\]
For $a\unlhd x$, we have $x\in[a,1]$ and hence
\[
V(x,1)=T(f(x),f(1))=T(x,1)=x.
\]
Thus $a$ is the absorbing element.

\textit{Commutativity.}
The functions $S$, $T$, and $H$ are commutative, and the sets $\M$, $\N$,
and $\G$ are symmetric. Hence $V$ is commutative.

\textit{Increasingness.}
Let $(x,z)\preccurlyeq(y,t)$.  If both pairs belong to the
same region among $\M$, $\N$, and $\G$, the desired inequality follows
from the increasingness of $S$, $T$, or $H$ and the order-preserving
properties of $f$ and $g$.


 Note that $\M$ is a lower subset of
$X^2$ and $\N$ is an upper subset,
a comparison from $\G$ to $\M$ or
from $\N$ to $\G$ is impossible.
 If $(x,z)\in\N$ and
$(y,t)\in\M$, then the lower-set property of $\M$ and the upper-set
property of $\N$ imply
$
(x,z),(y,t)\in\M\cap\N.
$
Both values are therefore equal to $a$.  It remains to consider
the following cross-region cases for $(x,z)$ and $(y,t)$.
\begin{enumerate}
\item If $(x,z)\in\M$ and $(y,t)\in\N$, then
$
V(x,z)\unlhd a\unlhd V(y,t),
$
and $a\in X^{mtr}$ gives $V(x,z)\unlhd V(y,t)$.
\item The transition from $\M$ to $\G$ is precisely
condition~\textup{(C3)}.
\item The transition from $\G$ to $\N$ is precisely
condition~\textup{(C4)}.
\end{enumerate}

 Thus $V$ is increasing.

\textit{Associativity.}
We prove that $V(V(x,y),z)=V(x,V(y,z))$ for all $x,y,z\in X$.
First,
we prove that for all $(u,v)\in X^2$,
\begin{equation}\label{eq3}
g(V(u,v))=S(g(u),g(v)),\qquad f(V(u,v))=T(f(u),f(v)). 
\end{equation}
If $(u,v)\in\M$, then $V(u,v)=S(g(u),g(v))\in[0,a]$. By (C1),
\[
g(V(u,v))=g(S(g(u),g(v)))=S(g(u),g(v)).
\]
Moreover, at least one of $u$ and $v$ belongs to $[0,a]\cup I_a^1$,
so Proposition~\ref{prop:functions} yields $f(u)=a$ or $f(v)=a$. Hence
$T(f(u),f(v))=a=f(V(u,v))$.
The argument for \(\N\) is analogous.
If $(u,v)\in\G$, then (C2) gives the identities.

We now compare the two iterated values.

If $x,y,z\in I_a^3$, then $V(x,y)=H(x,y)$ and
$V(y,z)=H(y,z)$. Moreover,
\[
(H(x,y),z)\in X\times I_a^3,
\qquad (x,H(y,z))\in I_a^3\times X,
\]
so both pairs belong to $\mathcal L_a$. Therefore
$V|_{\mathcal L_a}=\Lambda$ and condition \textup{(C5)} give
\[
V(V(x,y),z)=\Lambda(H(x,y),z)=\Lambda(x,H(y,z))=V(x,V(y,z)).
\]

Suppose that at least one of $x,y,z$ does not belong to $I_a^3$.
If $x\notin I_a^3$, then $(x,V(y,z))\notin\G$, while
$(x,y)\notin\G$ implies $V(x,y)\in[0,a]\cup[a,1]$ and hence
$(V(x,y),z)\notin\G$. The case $z\notin I_a^3$ is symmetric. If
$x,z\in I_a^3$ and $y\notin I_a^3$, then neither $(x,y)$ nor $(y,z)$
belongs to $\G$, so both inner values lie in
$[0,a]\cup[a,1]$ and again neither outer pair belongs to $\G$.
In all cases, both iterated values are produced by an interval branch;
therefore they lie in $[0,a]\cup[a,1]\subseteq X\setminus I_a$.

Using~\eqref{eq3} and the associativity of $S$, we obtain
\[
g(V(V(x,y),z))=S(S(g(x),g(y)),g(z))=S(g(x),S(g(y),g(z)))=g(V(x,V(y,z))).
\]
The analogous equality holds for $f$ by associativity of $T$.
By~\eqref{eq7} in Proposition~\ref{prop:functions}, the desired equality follows.

Thus $V$ is associative.

Hence $V$ is a nullnorm on $\mathbb{P}$ with absorbing element $a$.

Conversely, if the operation in~\eqref{eq2} is a nullnorm with absorbing
element $a$, Proposition~\ref{propp14} yields $a\in K$. Since $V$ is
proper by assumption, $a\in K\setminus\{0,1\}$.

\end{proof}

\begin{theorem}
\label{thm:representation}
Let $\mathbb{P}=(X,\unlhd,\wedge,\vee,0,1)$ be a bounded trellis,
let $a\in X\setminus\{0,1\}$, and let $V:X^2\to X$ be a binary
operation.
Then the following statements are equivalent:
\begin{enumerate}
\item[$\mathrm{(1)}$] $V$ is a proper nullnorm on $\mathbb P$ with absorbing
 element $a$.
\item[$\mathrm{(2)}$] $a\in K\setminus\{0,1\}$ and there exist   $S,T,f,g$ and $H$
satisfying all the hypotheses and conditions~\textup{(C1)--(C5)} of
Theorem~\ref{thm:construction} such that $V$ is given by
formula~\eqref{eq2}.
\end{enumerate}
Moreover, when these statements hold, the representing components are uniquely
recovered from $V$ by
\[
S=V|_{[0,a]^2},\qquad T=V|_{[a,1]^2},\qquad
f(x)=V(x,1),\qquad g(x)=V(x,0),\qquad H=V|_{\G}.
\]
\end{theorem}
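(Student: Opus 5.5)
The direction (2)$\Rightarrow$(1) is exactly the forward implication of Theorem~\ref{thm:construction}, so the work lies in (1)$\Rightarrow$(2) and in the uniqueness clause. Let $V$ be a proper nullnorm with absorbing element $a$. Proposition~\ref{propp14} gives $a\in K\setminus\{0,1\}$, so Lemma~\ref{lem:regional-sets123} applies. I would define the components as in the statement: $S=V|_{[0,a]^2}$, $T=V|_{[a,1]^2}$, $f(x)=V(x,1)$, $g(x)=V(x,0)$ and $H=V|_{\G}$.

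The first step is to check the ranges and the elementary properties.
\begin{itemize}
\item For $x,y\unlhd a$, monotonicity gives $V(x,y)\unlhd V(a,a)=a$, so $S$ maps into $[0,a]$. It is a t-conorm: commutativity, associativity and increasingness are inherited from $V$, and $S(x,0)=x$ by~\eqref{eq:boundary}. Dually, $T$ is a t-norm on $[a,1]$.
\item The bound $V(x,1)\unrhd V(a,0)=a$ shows $f(x)\in[a,1]$. The bound $V(x,1)\unlhd V(1,1)=1$ is automatic. Dually, $g(x)\in[0,a]$.
\item Both $f$ and $g$ are order-preserving, and (C1) is~\eqref{eq:boundary}.
\item $H$ is commutative and increasing as a restriction of $V$.
\end{itemize}

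The key structural step is the identity $g(V(u,v))=S(g(u),g(v))$ and its dual for $f$, which hold for all $u,v$. Both follow from associativity and commutativity:
\[
V(V(u,v),0)=V(V(u,v),V(0,0))=V(V(u,0),V(v,0)),
\]
using $V(0,0)=0$. Restricted to $\G$ this is (C2).

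Next I would show that $V$ agrees with formula~\eqref{eq2} on $\M$ and $\N$. This is the step I expect to be the main obstacle. Take $(x,y)\in\M$ with, say, $y\in[0,a]\cup I_a^1$. Then $V(x,y)\unlhd V(1,y)=f(y)$, and $f(y)=a$ by~\eqref{eq:collapse}, so $V(x,y)\unlhd a$ and hence $V(x,y)\in[0,a]$. Therefore
\[
V(x,y)=g(V(x,y))=S(g(x),g(y)).
\]
The case $(x,y)\in\N$ is dual. On $\G$, $V=H$ by definition, so $V$ is given by~\eqref{eq2}. The point to watch is that~\eqref{eq:collapse} needs $f$ and $g$ order-preserving with (C1), which we have already established. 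Comparing~\eqref{eq1} with $V$ then gives $V|_{\mathcal L_a}=\Lambda$.

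The remaining conditions now follow directly.
\begin{itemize}
\item (C3) and (C4) are monotonicity of $V$ across regions, since the relevant values are $V(x,z)$ and $V(y,t)$.
\item (C5) is associativity of $V$ on $x,y,z\in I_a^3$: the pairs $(H(x,y),z)$ and $(x,H(y,z))$ lie in $\mathcal L_a$, where $V=\Lambda$.
\end{itemize}

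Uniqueness holds because any representation via~\eqref{eq2} forces the stated formulas. We have $V|_{[0,a]^2}=S(g,g)=S$ by (C1) and $V|_{\G}=H$. For $f$: if $x\in[a,1]\cup I_a^2\cup I_a^3$ then $(x,1)\in\N$, so $V(x,1)=T(f(x),1)=f(x)$. If instead $x\in[0,a]\cup I_a^1$, then $(x,1)\in\M\cap\N$, so $V(x,1)=a=f(x)$ by~\eqref{eq:collapse}. The arguments for $g$ and $T$ are dual.
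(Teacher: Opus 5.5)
Your proposal is correct and follows the paper's proof almost step for step. It uses the same recovery of $S,T,f,g,H$, the same boundary-homomorphism identity to obtain (C2), the bound $V(\M)\subseteq[0,a]$, and monotonicity and associativity to obtain (C3)--(C5); the only difference is that you get $V(\M)\subseteq[0,a]$ uniformly from $f(y)=a$ on $[0,a]\cup I_a^1$, whereas the paper splits into the $[0,a]$ and $I_a^1$ cases. One small fix is needed: justify $a=V(a,0)\unlhd V(x,1)$ through commutativity, $V(a,0)=V(0,a)$ with $(0,a)\preccurlyeq(x,1)$, because comparing $(a,0)\preccurlyeq(x,1)$ directly would require $a\unlhd x$ (and likewise in the dual bound for $g$).
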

\begin{proof}
\noindent\textbf{$(1)\Rightarrow(2)$.}
Assume that $V$ is a nullnorm with absorbing element $a$.
By Proposition~\ref{propp14}, $a\in K$. Define
\[
S=V|_{[0,a]^2},\qquad T=V|_{[a,1]^2},\qquad
f(x)=V(x,1),\qquad g(x)=V(x,0),
\]
and set $H=V|_{\G}$.

The restrictions $S$ and $T$ are, respectively, a t-conorm on
$[0,a]$ and a t-norm on $[a,1]$. Indeed, the increasingness and the absorbing
property show that these intervals are closed under the corresponding
restrictions, and all the remaining axioms are inherited from $V$.
Moreover, $f$ and $g$ are order-preserving and
\[
g(x)=V(x,0)\unlhd V(x,a)=a,
\qquad
a=V(x,a)\unlhd V(x,1)=f(x).
\]
Thus $g:X\to[0,a]$ and $f:X\to[a,1]$. The boundary conditions give
\[
f(x)=x\quad(x\in[a,1]),
\qquad
g(x)=x\quad(x\in[0,a]),
\]
so condition~\textup{(C1)} holds and
Proposition~\ref{prop:functions} applies. The function $H:\G\to X$
is commutative and increasing because it is a restriction of $V$.

For all $u,v\in X$, associativity and commutativity, together with
$V(0,0)=0$ and $V(1,1)=1$, yield
\begin{equation}\label{eq:boundary-homomorphisms}
\begin{aligned}
g(V(u,v))
 &=V(V(u,v),0)
  =V(V(u,0),V(v,0))
  =S(g(u),g(v)),\\
f(V(u,v))
 &=V(V(u,v),1)
  =V(V(u,1),V(v,1))
  =T(f(u),f(v)).
\end{aligned}
\end{equation}

We next recover $V$ on the three regions. If $(u,v)\in\M$, one
coordinate belongs to $[0,a]\cup I_a^1$. If, for example,
$u\in[0,a]$, then $V(u,v)\unlhd V(a,v)=a$. If $u\in I_a^1$,
Proposition~\ref{prop:functions} gives $f(u)=V(u,1)=a$, and hence
$V(u,v)\unlhd V(u,1)=a$. The case in which the second coordinate
belongs to $[0,a]\cup I_a^1$ follows by commutativity. Therefore
$V(\M)\subseteq[0,a]$, and~\eqref{eq:boundary-homomorphisms} gives
\[
V(u,v)=g(V(u,v))=S(g(u),g(v))
\qquad ((u,v)\in\M).
\]
Dually,
\[
V(\N)\subseteq[a,1],\qquad
V(u,v)=f(V(u,v))=T(f(u),f(v))
\qquad ((u,v)\in\N).
\]
On $\G$, we have $V=H$ by definition. Hence formula~\eqref{eq2}
recovers $V$ on all of $X^2$, and the mixed interaction function
satisfies
\[
\Lambda=V|_{\mathcal L_a}.
\]
For $(x,y)\in\G$, the two identities
in~\eqref{eq:boundary-homomorphisms} are precisely
condition~\textup{(C2)}.

Conditions (C3) and (C4) follow from the increasingness of $V$.



For (C5), let $x,y,z\in I_a^3$. Then $H(x,y)=V(x,y)$,
$H(y,z)=V(y,z)$, and, by the definition of $\Lambda$,
\[
\Lambda(H(x,y),z)=V(V(x,y),z),\qquad \Lambda(x,H(y,z))=V(x,V(y,z)).
\]
Associativity of $V$ then yields the desired equality.

\noindent\textbf{$(2)\Rightarrow(1)$.}
This implication follows directly from Theorem~\ref{thm:construction}.

Finally, the displayed recovery formulas in the theorem follow from
the definitions in the forward implication. Conversely, if $V$ is
given by~\eqref{eq2}, condition~\textup{(C1)} and
Proposition~\ref{prop:functions} show directly that its two interval
restrictions and its functions $x\mapsto V(x,1)$ and
$x\mapsto V(x,0)$ are exactly $S,T,f$  and $g$, while its restriction to
$\G$ is $H$. Thus the representing components are unique.
\end{proof}

\begin{remark}
If the absorbing element is $a=0$, then a nullnorm $V$ is precisely a
t-norm, whereas if $a=1$, then $V$ is precisely a t-conorm. Thus, the
endpoint cases are covered by t-norms and t-conorms, respectively.
Together with Theorem~\ref{thm:representation}, which characterizes all
proper nullnorms, this observation yields a complete representation of
all nullnorms on bounded trellises. Accordingly, in this manuscript,  we use the title
``A complete representation theorem for all nullnorms on bounded
trellises.''
\end{remark}



\begin{remark}\label{rem:scope}
The representation of the class $\mathcal V_a$ obtained
in~\cite{Xiu2025} imposes a restriction on the range of $H$.
No such range restriction is imposed in the present theorem. More
precisely, the function
$
H:I_a^3\times I_a^3\rightarrow X
$
is allowed to take values in any part of $X$, including $I_a^1$ and
$I_a^2$, provided that conditions \textup{(C2)--(C5)} are satisfied.
This unrestricted range explains why condition \textup{(C5)} must be
formulated in terms of the mixed interaction function $\Lambda$. For example,
if $H(x,y)\in I_a^1\cup I_a^2$, then $H(x,y)\notin I_a^3$, and hence
the expression $H(H(x,y),z)$
is not defined, because the domain of $H$ is only $I_a^3\times I_a^3$.
In contrast, each pair occurring in \textup{(C5)} belongs to
$\mathcal L_a$, since
\[
(H(x,y),z)\in X\times I_a^3,
\qquad
(x,H(y,z))\in I_a^3\times X.
\]
Thus both evaluations of $\Lambda$ are defined and automatically select
the appropriate branches of the piecewise construction. Consequently,
condition \textup{(C5)} requires
\[
\Lambda(H(x,y),z)=\Lambda(x,H(y,z)),
\qquad x,y,z\in I_a^3,
\]
which is precisely the associativity condition needed for triples
entirely contained in $I_a^3$.

\end{remark}



The following example illustrates the construction method described in
Theorem~\ref{thm:construction}. In this example, the range of $H$
intersects each of the sets $[0,a]$, $[a,1]$, $I_a^1$, $I_a^2$, and
$I_a^3$.
\begin{example}\label{ex31}
Let   $X=\{0,l,m,a,n,v,b,d,p,w,s,t,e,1\}.$
The pseudo-order shown in Figure~\ref{figure3} is specified as follows.
The subset $\{0,l,m,a,n,v,1\}$ is a chain in the displayed order,
$0\unlhd x\unlhd1$ for every $x\in X$, and the additional
nontrivial comparisons are
\[
l\unlhd b,\quad b\unlhd m,\quad b\unlhd n,\qquad
m\unlhd d,\quad n\unlhd d,\quad d\unlhd v,
\]
and
\[
l\unlhd e,\quad m\unlhd e,\quad e\unlhd n,\quad e\unlhd v.
\]
There are no other nontrivial comparisons; in particular,
$b\parallel a$ and $d\parallel a$. Every pair has a greatest lower
bound and a least upper bound, so
$\mathbb{P}=(X,\unlhd,\wedge,\vee,0,1)$ is a bounded trellis.
Moreover, $a$ is middle-transitive and no element of
$\mathopen{]}a,1\mathclose{[}\cup I_a^2$ lies below an element of
$\mathopen{]}0,a\mathclose{[}\cup I_a^1$; hence $a\in K$. The five
regions determined by $a$ are
\[
\begin{aligned}
{[0,a]}&=\{0,l,m,a\},\qquad [a,1]=\{a,n,v,1\},\\
I_a^1&=\{b\},\qquad I_a^2=\{d\},\qquad
I_a^3=\{p,w,s,t,e\}.
\end{aligned}
\]

Define the order-preserving maps $f:X\to[a,1]$ and
$g:X\to[0,a]$ by the values listed in
Table~\ref{tab:ex31-boundary-maps}.
\begin{table}[H]
\centering
\caption{The order-preserving maps $f$ and $g$ in
Example~\ref{ex31}.
}
\label{tab:ex31-boundary-maps}
\small
\setlength{\tabcolsep}{3.5pt}
\begin{tabular}{c|*{14}{c}}
\hline
$x$&$0$&$l$&$m$&$a$&$n$&$v$&$b$&$d$&$p$&$w$&$s$&$t$&$e$&$1$\\
\hline
$f(x)$&$a$&$a$&$a$&$a$&$n$&$v$&$a$&$n$&$n$&$n$&$v$&$v$&$a$&$1$\\
$g(x)$&$0$&$l$&$m$&$a$&$a$&$a$&$m$&$a$&$l$&$l$&$m$&$m$&$a$&$a$\\
\hline
\end{tabular}
\end{table}
Identify the two boundary
chains with $\{0,1,2,3\}$ by
\[
0\leftrightarrow0, l\leftrightarrow1, m\leftrightarrow2,
\ a\leftrightarrow3
\quad\text{and}\quad
a\leftrightarrow0, n\leftrightarrow1, v\leftrightarrow2,
\ 1\leftrightarrow3.
\]
For the corresponding indices, define
\[
S(i,j)=\min\{3,i+j\},\qquad
T(i,j)=\max\{0,i+j-3\}.
\]
The complete operation tables of $S$ and $T$ are given in
Table~\ref{tab:ex31-st}.
\begin{table}[H]
\centering
\caption{The \L ukasiewicz t-conorm
$S$ on $[0,a]$ and t-norm $T$ on $[a,1]$ in
Example~\ref{ex31}.
}
\label{tab:ex31-st}
\small
\renewcommand{\arraystretch}{1.05}
\begin{tabular}{c|cccc}
$S$&$0$&$l$&$m$&$a$\\ \hline
$0$&$0$&$l$&$m$&$a$\\
$l$&$l$&$m$&$a$&$a$\\
$m$&$m$&$a$&$a$&$a$\\
$a$&$a$&$a$&$a$&$a$
\end{tabular}
\qquad
\begin{tabular}{c|cccc}
$T$&$a$&$n$&$v$&$1$\\ \hline
$a$&$a$&$a$&$a$&$a$\\
$n$&$a$&$a$&$a$&$n$\\
$v$&$a$&$a$&$n$&$v$\\
$1$&$a$&$n$&$v$&$1$
\end{tabular}
\end{table}
Thus $S$ is the four-element \L ukasiewicz t-conorm on $[0,a]$,
and $T$ is the four-element \L ukasiewicz t-norm on $[a,1]$.
Define $H:I_a^3\times I_a^3\to X$ by the values listed in
Table~\ref{tab:ex31-h}.
\begin{table}[H]
\centering
\caption{The function $H$ in
Example~\ref{ex31}.}
\label{tab:ex31-h}
\small
\renewcommand{\arraystretch}{1.05}
\begin{tabular}{c|ccccc}
\hline
$H$&$p$&$w$&$s$&$t$&$e$\\ \hline
$p$&$m$&$m$&$e$&$a$&$a$\\
$w$&$m$&$b$&$a$&$a$&$a$\\
$s$&$e$&$a$&$n$&$n$&$a$\\
$t$&$a$&$a$&$n$&$d$&$a$\\
$e$&$a$&$a$&$a$&$a$&$a$\\
\hline
\end{tabular}
\end{table}
Its range is
\[
H(I_a^3\times I_a^3)=\{m,n,a,b,d,e\}.
\]
Consequently, its range intersects each of the five regions determined
by $a$: $m\in[0,a]$,
$n\in[a,1]$, $b\in I_a^1$, $d\in I_a^2$, and $e\in I_a^3$.
\begin{samepage}
The mixed interaction function $\Lambda$ on
$\mathcal L_a=(X\times I_a^3)\cup(I_a^3\times X)$ is displayed in
Table~\ref{tab:ex31-lambda}. Its rows are indexed by $x\in X$ and its
columns by $y\in I_a^3=\{p,w,s,t,e\}$. Since $\Lambda$ is symmetric,
this table determines its values on all of $\mathcal L_a$.
\begin{table}[H]
\centering
\caption{The mixed interaction function $\Lambda$ on
$X\times I_a^3$ in Example~\ref{ex31}.
}
\label{tab:ex31-lambda}
\small
\renewcommand{\arraystretch}{1.05}
\setlength{\tabcolsep}{7pt}
\begin{tabular}{c|ccccc}
\hline
$\Lambda$&$p$&$w$&$s$&$t$&$e$\\ \hline
$0$&$l$&$l$&$m$&$m$&$a$\\
$l$&$m$&$m$&$a$&$a$&$a$\\
$m$&$a$&$a$&$a$&$a$&$a$\\
$a$&$a$&$a$&$a$&$a$&$a$\\
$n$&$a$&$a$&$a$&$a$&$a$\\
$v$&$a$&$a$&$n$&$n$&$a$\\
$b$&$a$&$a$&$a$&$a$&$a$\\
$d$&$a$&$a$&$a$&$a$&$a$\\
$p$&$m$&$m$&$e$&$a$&$a$\\
$w$&$m$&$b$&$a$&$a$&$a$\\
$s$&$e$&$a$&$n$&$n$&$a$\\
$t$&$a$&$a$&$n$&$d$&$a$\\
$e$&$a$&$a$&$a$&$a$&$a$\\
$1$&$n$&$n$&$v$&$v$&$a$\\
\hline
\end{tabular}
\end{table}
\end{samepage}

We now verify that $f$, $g$, $S$, $T$, $H$, and $\Lambda$
satisfy the hypotheses of
Theorem~\ref{thm:construction}. Condition~\textup{(C1)} follows from
Table~\ref{tab:ex31-boundary-maps}.

To verify \textup{(C2)}, put $P=\{p,w\}$ and $R=\{s,t\}$.
Table~\ref{tab:ex31-boundary-maps} gives
\[
(g(x),f(x))=
\begin{cases}
(l,n),&x\in P,\\
(m,v),&x\in R,\\
(a,a),&x=e.
\end{cases}
\]
Using the operation tables of $S$ and $T$, we obtain the transported
input values in Table~\ref{tab:ex31-transported-pairs}. The middle
column also covers $R\times P$, since $S$ and $T$ are commutative.
\begin{table}[H]
\centering
\caption{Pairs of transported values used to verify
condition~\textup{(C2)} in Example~\ref{ex31}.}
\label{tab:ex31-transported-pairs}
\small
\begin{tabular}{c|ccc}
\hline
\text{input class}&$P^2$&$(P\times R)\cup(R\times P)$&$R^2$\\ \hline
$(S(g(\cdot),g(\cdot)),T(f(\cdot),f(\cdot)))$
&$(m,a)$&$(a,a)$&$(a,n)$\\ \hline
\end{tabular}
\end{table}

On the other hand, Table~\ref{tab:ex31-h} gives
\[
H(P^2)\subseteq\{m,b\},\qquad
H\bigl((P\times R)\cup(R\times P)\bigr)\subseteq\{a,e\},
\qquad H(R^2)\subseteq\{n,d\}.
\]
For these possible output values, Table~\ref{tab:ex31-boundary-maps}
gives
\[
\begin{aligned}
(g(h),f(h))&=(m,a) &&\text{for }h\in\{m,b\},\\
(g(h),f(h))&=(a,a) &&\text{for }h\in\{a,e\},\\
(g(h),f(h))&=(a,n) &&\text{for }h\in\{n,d\}.
\end{aligned}
\]
Thus, for every $(x,y)\in(P\cup R)^2$,
\[
\bigl(S(g(x),g(y)),T(f(x),f(y))\bigr)
=\bigl(g(H(x,y)),f(H(x,y))\bigr).
\]
If $x=e$ or $y=e$, then $H(x,y)=a$ and both sides of this equality
are $(a,a)$. These cases exhaust
$I_a^3\times I_a^3$, and therefore condition~\textup{(C2)} holds.


Since the elements of $I_a^3$ are pairwise incomparable, the
componentwise order on $I_a^3\times I_a^3$ contains no nontrivial
comparisons. Hence the function $H$ defined above is automatically
increasing. Retaining the notation $P=\{p,w\}$ and $R=\{s,t\}$, define
\[
E=(\{e\}\times I_a^3)\cup(I_a^3\times\{e\}).
\]
\begin{samepage}
For each
$\mathcal C\in
\bigl\{P^2,(P\times R)\cup(R\times P),R^2,E\bigr\}$, define
\[
L(\mathcal C)=
\left\{
S(g(u),g(v))\,\middle|\,
\begin{array}{l}
(u,v)\in\M,\ \text{and}\\
(u,v)\preccurlyeq(x,y)
\text{ for some }(x,y)\in\mathcal C
\end{array}
\right\},
\]
and
\[
U(\mathcal C)=
\left\{
T(f(u),f(v))\,\middle|\,
\begin{array}{l}
(u,v)\in\N,\ \text{and}\\
(x,y)\preccurlyeq(u,v)
\text{ for some }(x,y)\in\mathcal C
\end{array}
\right\}.
\]
\end{samepage}
A direct inspection of the comparisons in Figure~\ref{figure3} yields
Table~\ref{tab:ex31-order-bounds}.
\begin{table}[H]
\centering
\caption{Values used to verify conditions~\textup{(C3)} and
\textup{(C4)} in Example~\ref{ex31}.}
\label{tab:ex31-order-bounds}
\small
\begin{tabular}{c|c|c|c}
\hline
$\mathcal C$&$H(\mathcal C)$&$L(\mathcal C)$&$U(\mathcal C)$\\ \hline
$P^2$&$\{m,b\}$&$\{0,l\}$&$\{n,1\}$\\
$(P\times R)\cup(R\times P)$
    &$\{a,e\}$&$\{0,l,m\}$&$\{n,v,1\}$\\
$R^2$&$\{n,d\}$&$\{0,m\}$&$\{v,1\}$\\
$E$&$\{a\}$&$\{0,l,m,a\}$&$\{a,n,v,1\}$\\ \hline
\end{tabular}
\end{table}
For every set $\mathcal C$ appearing in
Table~\ref{tab:ex31-order-bounds},
\[
r\unlhd h\unlhd u
\qquad
\bigl(r\in L(\mathcal C),\
      h\in H(\mathcal C),\
      u\in U(\mathcal C)\bigr).
\]
Therefore, conditions \textup{(C3)} and \textup{(C4)} are satisfied.


Finally, let $x,y,z\in I_a^3$. If
$H(x,y)\in\{m,b\}$, then
$\Lambda(H(x,y),z)=S(m,g(z))=a$. If
$H(x,y)\in\{n,d\}$, then
$\Lambda(H(x,y),z)=T(n,f(z))=a$. If
$H(x,y)\in\{a,e\}$, then
$\Lambda(H(x,y),z)=a$. Hence
\[
\Lambda(H(x,y),z)=a.
\]
Applying the same argument to $H(y,z)$ and using the symmetry of
$\Lambda$, we obtain
\[
\Lambda(x,H(y,z))
 =\Lambda(H(y,z),x)=a.
\]
Therefore condition~\textup{(C5)} holds.
 All assumptions of
Theorem~\ref{thm:construction} are satisfied, so equation~\eqref{eq2}
defines a nullnorm on $\mathbb{P}$ with absorbing element $a$.

The resulting nullnorm $V$ is displayed in
Table~\ref{tab:ex31-nullnorm}.
\begin{table}[H]
\centering
\caption{The nullnorm $V$ constructed via Theorem \ref{thm:construction}.
}
\label{tab:ex31-nullnorm}
\footnotesize
\renewcommand{\arraystretch}{0.95}
\setlength{\tabcolsep}{2.8pt}
\begin{tabular}{c|*{14}{c}}
\hline
$V$&$0$&$l$&$m$&$a$&$n$&$v$&$b$&$d$&$p$&$w$&$s$&$t$&$e$&$1$\\ \hline
$0$&$0$&$l$&$m$&$a$&$a$&$a$&$m$&$a$&$l$&$l$&$m$&$m$&$a$&$a$\\
$l$&$l$&$m$&$a$&$a$&$a$&$a$&$a$&$a$&$m$&$m$&$a$&$a$&$a$&$a$\\
$m$&$m$&$a$&$a$&$a$&$a$&$a$&$a$&$a$&$a$&$a$&$a$&$a$&$a$&$a$\\
$a$&$a$&$a$&$a$&$a$&$a$&$a$&$a$&$a$&$a$&$a$&$a$&$a$&$a$&$a$\\
$n$&$a$&$a$&$a$&$a$&$a$&$a$&$a$&$a$&$a$&$a$&$a$&$a$&$a$&$n$\\
$v$&$a$&$a$&$a$&$a$&$a$&$n$&$a$&$a$&$a$&$a$&$n$&$n$&$a$&$v$\\
$b$&$m$&$a$&$a$&$a$&$a$&$a$&$a$&$a$&$a$&$a$&$a$&$a$&$a$&$a$\\
$d$&$a$&$a$&$a$&$a$&$a$&$a$&$a$&$a$&$a$&$a$&$a$&$a$&$a$&$n$\\
$p$&$l$&$m$&$a$&$a$&$a$&$a$&$a$&$a$&$m$&$m$&$e$&$a$&$a$&$n$\\
$w$&$l$&$m$&$a$&$a$&$a$&$a$&$a$&$a$&$m$&$b$&$a$&$a$&$a$&$n$\\
$s$&$m$&$a$&$a$&$a$&$a$&$n$&$a$&$a$&$e$&$a$&$n$&$n$&$a$&$v$\\
$t$&$m$&$a$&$a$&$a$&$a$&$n$&$a$&$a$&$a$&$a$&$n$&$d$&$a$&$v$\\
$e$&$a$&$a$&$a$&$a$&$a$&$a$&$a$&$a$&$a$&$a$&$a$&$a$&$a$&$a$\\
$1$&$a$&$a$&$a$&$a$&$n$&$v$&$a$&$n$&$n$&$n$&$v$&$v$&$a$&$1$\\
\hline
\end{tabular}
\end{table}
\end{example}

\begin{figure}[htbp]
  \centering
  \begin{tikzpicture}[scale=0.92]
    \coordinate (zero) at (0,0);
    \coordinate (l)    at (0,1);
    \coordinate (m)    at (0,2);
    \coordinate (a)    at (0,3);
    \coordinate (n)    at (0,4);
    \coordinate (v)    at (0,5);
    \coordinate (one)  at (0,6);
    \coordinate (b)    at (-1.7,1.55);
    \coordinate (e)    at (-1.9,3.0);
    \coordinate (d)    at (1.7,4.45);
    \coordinate (p)    at (-4.8,1.8);
    \coordinate (w)    at (-3.8,3.9);
    \coordinate (s)    at (3.8,2.0);
    \coordinate (t)    at (4.8,4.0);

    \draw (zero)--(l)--(m)--(a)--(n)--(v)--(one);
    \draw (l)--(b)--(m);
    \draw (b) to[bend left=18] (n);
    \draw (m) to[bend right=16] (d);
    \draw (n)--(d)--(v);
    \draw (m)--(e)--(n);
    \draw (zero)--(p)--(one);
    \draw (zero)--(w)--(one);
    \draw (zero)--(s)--(one);
    \draw (zero)--(t)--(one);

    \draw[dashed] (b) to[bend left=22] (a);
    \draw[dashed] (a) to[bend right=22] (d);

    \foreach \q in {zero,l,m,a,n,v,one,b,d,e,p,w,s,t}
      \fill (\q) circle (2pt);

    \node[below]       at (zero) {$0$};
    \node[right]       at (l) {$l$};
    \node[right]       at (m) {$m$};
    \node[right]       at (a) {$a$};
    \node[right]       at (n) {$n$};
    \node[right]       at (v) {$v$};
    \node[above]       at (one) {$1$};
    \node[left]        at (b) {$b$};
    \node[left]        at (e) {$e$};
    \node[right]       at (d) {$d$};
    \node[left]        at (p) {$p$};
    \node[left]        at (w) {$w$};
    \node[right]       at (s) {$s$};
    \node[right]       at (t) {$t$};
  \end{tikzpicture}
  \caption{Hasse-type diagram of the bounded trellis $\mathbb P$ in
  Example~\ref{ex31}. The dashed curves indicate the missing
  comparisons $b \ntrianglelefteq  a$ and $a \ntrianglelefteq d$.
  }
  \label{figure3}
\end{figure}


The following example demonstrates the independence of the associativity
condition~\textup{(C5)} and, in particular, shows that this condition
cannot be omitted.
In other words, the component hypotheses and
\textup{(C1)--(C4)} in
Theorem~\ref{thm:construction} do not imply \textup{(C5)}, even
when the underlying trellis is a bounded lattice.

\begin{example}\label{ex:associativity}
Let $X=\{0,a,c,d,1\}$ be the lattice whose three pairwise
incomparable atoms are $a, c$ and $d$. Thus $I_a^3=\{c,d\}$ and $a\in K$.
Take $S=\max$ on $\{0,a\}$ and $T=\min$ on $\{a,1\}$. Define the
remaining functions by Table~\ref{tab:associativity-data}.
\begin{table}[H]
\centering
\caption{The functions $f$, $g$, and $H$ in
Example~\ref{ex:associativity}.}
\label{tab:associativity-data}
\begin{tabular}{c|ccccc}
$x$&$0$&$a$&$c$&$d$&$1$\\\hline
$f(x)$&$a$&$a$&$1$&$1$&$1$\\
$g(x)$&$0$&$a$&$0$&$0$&$a$
\end{tabular}
\qquad
\begin{tabular}{c|cc}
$H$&$c$&$d$\\\hline
$c$&$d$&$c$\\
$d$&$c$&$c$
\end{tabular}
\end{table}
Then $f$ and $g$ are order-preserving and satisfy \textup{(C1)}. Moreover,
$H$ is commutative. Also, it is increasing because the componentwise
relation on $\{c,d\}\times \{c,d\}$ has no comparisons other than reflexive ones.
The first identity in \textup{(C2)} has both sides equal to $0$, and the
second has both sides equal to $1$.

For \textup{(C3)}, suppose that $(x,z)\in\M$ lies below an element of
$\{c,d\}\times \{c,d\}$.
The coordinate of $(x,z)$ that belongs to $[0,a]$ must be $0$, and the
other coordinate is also mapped to $0$ by $g$; hence the lower value is
$S(0,0)=0$.
Dually, every comparison in
\textup{(C4)} has upper value $T(1,1)=1$. Thus \textup{(C3)} and
\textup{(C4)} hold. However,
\[
\Lambda(H(c,c),d)=\Lambda(d,d)=c
\ne d=\Lambda(c,c)=\Lambda(c,H(c,d)).
\]
Thus \textup{(C5)} and associativity fail at $(c,c,d)$. The
well-definedness, absorbing element, commutativity, and monotonicity in
the proof of Theorem~\ref{thm:construction} use only
\textup{(C1)--(C4)}, so they
remain valid for this nonassociative operation.
\end{example}

%

\section{Special Cases and Range-Restricted Subclasses}
\label{sec:specializations}
\subsection{Empty and Singleton Cases of
\texorpdfstring{$I_a^3$}{Ia3}}




Representation theorems for nullnorms on bounded trellises depend
crucially on the values of the nullnorm on the region
$I_a^3\times I_a^3$. For instance, the representation
in~\cite{Xiu2025} uses a function 
$
H:I_a^3\times I_a^3\rightarrow
[0,a]\cup[a,1]\cup I_a^3,
$ 
whereas the representation established in this paper permits 
$
H:I_a^3\times I_a^3\rightarrow X.
$ 
This distinction highlights the important role of $I_a^3$ itself in
the representation of nullnorms on bounded trellises. To further
clarify this role, we specialize Theorem~\ref{thm:construction} to the
cases $I_a^3=\varnothing$ and $I_a^3=\{c\}$.
These cases are
structurally important because the size of $I_a^3$ determines both
the freedom in choosing $H$ and the complexity of condition
\textup{(C5)}. If $I_a^3=\varnothing$, there are no values of $H$ to
specify, and the construction is completely determined by $S,T,f$,
and $g$. If $I_a^3=\{c\}$, the function $H$ is determined by the single
value $H(c,c)$, and the sole instance of condition~\textup{(C5)} follows
from the symmetry of $\Lambda$. Consequently,
a genuinely nontrivial associativity condition can arise only
when $I_a^3$ contains at least two elements. The following two
theorems give the corresponding specializations of
Theorem~\ref{thm:construction}.


\begin{theorem}
\label{thm:empty-core}
Let $\mathbb{P}=(X,\unlhd,\wedge,\vee,0,1)$ be a bounded trellis,
let $a\in X\setminus\{0,1\}$, and suppose that $I_a^3=\varnothing$.
Let $S:[0,a]^2\to[0,a]$ be a t-conorm,
$T:[a,1]^2\to[a,1]$ be a t-norm, and let
$f:X\to[a,1]$ and $g:X\to[0,a]$ be order-preserving maps satisfying
\[
f(x)=x\quad(x\in[a,1]),\qquad
g(x)=x\quad(x\in[0,a]).
\]
Then $a\in K$
if and only if the binary operation
$V:X^2\to X$ defined by
\begin{equation}\label{eq:empty-core}
V(x,y)=
\begin{cases}
S(g(x),g(y)), & (x,y)\in\M,\\
T(f(x),f(y)), & (x,y)\in\N
\end{cases}
\end{equation}
is a nullnorm on $\mathbb{P}$ with absorbing element $a$.
\end{theorem}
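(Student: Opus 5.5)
We obtain Theorem~\ref{thm:empty-core} as a direct specialization of Theorem~\ref{thm:construction}. Since $I_a^3=\varnothing$, we have $\G=\varnothing$ and $\mathcal L_a=\varnothing$. We therefore take $H$ to be the empty function $\varnothing\to X$. It is vacuously commutative and increasing. The mixed interaction function $\Lambda$ given by~\eqref{eq1} is also the empty function. We then check the hypotheses of Theorem~\ref{thm:construction} one at a time. Condition~\textup{(C1)} is exactly the assumption on $f$ and $g$ in the statement. Conditions~\textup{(C2)}, \textup{(C3)}, \textup{(C4)} and~\textup{(C5)} each quantify over pairs in $\G$ or triples in $I_a^3$, so all of them hold vacuously.

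Next we compare the two formulas for $V$. With $\G=\varnothing$, formula~\eqref{eq2} reduces to~\eqref{eq:empty-core}. We still need $\M\cup\N=X^2$ so that~\eqref{eq:empty-core} is defined on all of $X^2$. When $a\in K$, this follows from Lemma~\ref{lem:regional-sets123}: it gives $X=[0,a]\cup[a,1]\cup I_a^1\cup I_a^2$, because $I_a^3$ is empty. Agreement of the two branches on $\M\cap\N$ was already established in the well-definedness step of the proof of Theorem~\ref{thm:construction}, using Proposition~\ref{prop:functions}.

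For the forward direction, assume $a\in K$. Then $a\in K\setminus\{0,1\}$, since $a\notin\{0,1\}$ by hypothesis. Theorem~\ref{thm:construction} then shows that $V$ is a proper nullnorm with absorbing element $a$. For the converse, assume $V$ is a nullnorm with absorbing element $a$. Proposition~\ref{propp14} gives $a\in K$ directly.

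The only delicate point is the meaning of~\eqref{eq:empty-core} when $a\notin K$. In that case the covering of $X$ and the compatibility of the branches come from the definitions of $I_a^1,I_a^2,I_a^3$ alone, without Lemma~\ref{lem:regional-sets123}. We therefore read the converse as in Theorem~\ref{thm:construction}: whenever~\eqref{eq:empty-core} defines a nullnorm, Proposition~\ref{propp14} forces $a\in K$. Apart from this, the argument is pure bookkeeping of vacuous conditions.
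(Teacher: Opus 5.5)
Your proposal is correct and is essentially the paper's own proof. You take the unique (empty) $H$ on $\G=\varnothing$, note that (C1) is the stated hypothesis and that (C2)--(C5) are vacuous, and read off both directions from Theorem~\ref{thm:construction}. Your extra remark on well-definedness is accurate in substance: the covering $\M\cup\N=X^2$ holds directly from the definitions of $I_a^1,I_a^2,I_a^3$, and the agreement of the two branches on $\M\cap\N$ comes from the collapse in Proposition~\ref{prop:functions}, which uses only (C1), order-preservation and antisymmetry rather than $a\in K$ (so it is not quite ``from the definitions alone'').
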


\begin{proof}
Since $I_a^3=\varnothing$, we have $\G=\varnothing$. Hence there is
a unique function $H:\G\to X$, and it is automatically commutative and
increasing. Conditions \textup{(C2)--(C5)} of
Theorem~\ref{thm:construction} are vacuous, while \textup{(C1)} is exactly the
displayed condition on $f$ and $g$. Equation~\eqref{eq:empty-core} is
therefore the specialization of~\eqref{eq2}, so the assertion follows
from Theorem~\ref{thm:construction}.
\end{proof}

\begin{remark}
For the fixed absorbing element $a$, if $I_a^3=\varnothing$, then
$
\mathcal V=\mathcal V_a=\mathcal V_{cwa}=\mathcal V^{123}.
$
Indeed, Theorem~\ref{thm:representation} shows that every member of
$\mathcal V$ has the form~\eqref{eq:empty-core}, and hence all of its
values lie in $[0,a]\cup[a,1]$. The defining condition of
$\mathcal V^{123}$ is vacuous, which proves the asserted equality.
\end{remark}

\begin{theorem}
\label{thm:singleton-core}
Let $\mathbb{P}=(X,\unlhd,\wedge,\vee,0,1)$ be a bounded trellis,
let $a\in X\setminus\{0,1\}$, and suppose that $I_a^3=\{c\}$.
Let $S:[0,a]^2\to[0,a]$ be a t-conorm,
$T:[a,1]^2\to[a,1]$ be a t-norm, and let
$f:X\to[a,1]$ and $g:X\to[0,a]$ be order-preserving maps satisfying
$
f(x)=x\quad(x\in[a,1]),\
g(x)=x\quad(x\in[0,a]).
$
Assume that there exists $h\in X$ such that
\begin{align}
g(h)&=S(g(c),g(c)), &
f(h)&=T(f(c),f(c)),                                      \label{eq:singleton-fibres}\\
S(g(x),g(z))&\unlhd h
&&\text{whenever }(x,z)\in\M\text{ and }
  (x,z)\preccurlyeq(c,c),                               \label{eq:singleton-lower}\\
h&\unlhd T(f(y),f(t))
&&\text{whenever }(y,t)\in\N\text{ and }
  (c,c)\preccurlyeq(y,t).                               \label{eq:singleton-upper}
\end{align}
Then $a\in K$ if and only if the binary operation
$V:X^2\to X$ defined by
\begin{equation}\label{eq:singleton-core}
V(x,y)=
\begin{cases}
S(g(x),g(y)), & (x,y)\in\M,\\
T(f(x),f(y)), & (x,y)\in\N,\\
h,           & (x,y)=(c,c)
\end{cases}
\end{equation}
is a nullnorm on $\mathbb{P}$ with absorbing element $a$.
\end{theorem}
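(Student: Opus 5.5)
The plan is to reduce Theorem~\ref{thm:singleton-core} directly to Theorem~\ref{thm:construction}, exactly as Theorem~\ref{thm:empty-core} was obtained. Since $I_a^3=\{c\}$, we have $\G=\{(c,c)\}$. We define $H:\G\to X$ by $H(c,c)=h$, and let $\Lambda$ be the associated mixed interaction function given by~\eqref{eq1}. With these choices, formula~\eqref{eq:singleton-core} is literally formula~\eqref{eq2}. The equivalence ``$a\in K$ iff $V$ is a nullnorm with absorbing element $a$'' will then follow from Theorem~\ref{thm:construction}, once its hypotheses and (C1)--(C5) are checked. The word ``proper'' is automatic because $a\notin\{0,1\}$, and the conditions $a\in K$ and $a\in K\setminus\{0,1\}$ coincide.

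The component hypotheses are checked as follows.
\begin{itemize}
\item $H$ is commutative, since its only argument is the symmetric pair $(c,c)$.
\item $H$ is increasing, since the componentwise relation on the one-point set $\G$ is only reflexive.
\item (C1) is the displayed assumption on $f$ and $g$.
\item (C2), evaluated at the single pair $(c,c)$, reads $S(g(c),g(c))=g(h)$ and $T(f(c),f(c))=f(h)$, which is exactly~\eqref{eq:singleton-fibres}.
\item (C3) asks that $S(g(x),g(z))\unlhd H(y,t)$ whenever $(x,z)\in\M$, $(y,t)\in\G$ and $(x,z)\preccurlyeq(y,t)$. Here $(y,t)$ is forced to be $(c,c)$, so this is~\eqref{eq:singleton-lower}.
\item (C4) likewise reduces to~\eqref{eq:singleton-upper}.
\end{itemize}

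The only point requiring a remark is (C5). Its single instance is $x=y=z=c$, where it reads $\Lambda(H(c,c),c)=\Lambda(c,H(c,c))$, that is, $\Lambda(h,c)=\Lambda(c,h)$. Both pairs lie in $\mathcal L_a$. As noted after~\eqref{eq1}, $\Lambda$ is well defined and symmetric once (C1) holds, because $\mathcal L_a$, $\M$, $\N$ and $\G$ are symmetric and $S$, $T$, $H$ are commutative. Hence (C5) holds.

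Well-definedness of $\Lambda$ needs Proposition~\ref{prop:functions}, which gives agreement of the $S$- and $T$-branches on $\M\cap\N\cap\mathcal L_a$; its hypotheses are exactly (C1). One small point to confirm is that Theorem~\ref{thm:construction} genuinely uses $a\in K$ only through Lemma~\ref{lem:regional-sets123} in the forward direction, and Proposition~\ref{propp14} in the converse. This means no extra assumption is hidden when we specialize. I expect no real obstacle: the step most worth stating carefully is the symmetry argument for (C5), and all remaining verifications are immediate restatements of the hypotheses.
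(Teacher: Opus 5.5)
Your proposal is correct and follows the paper's proof: you set $H(c,c)=h$, observe that (C2)--(C4) are exactly \eqref{eq:singleton-fibres}--\eqref{eq:singleton-upper}, dispose of the single instance of (C5) by the symmetry of $\Lambda$, and then invoke Theorem~\ref{thm:construction}. Your side remark on where $a\in K$ enters is slightly off, since the increasingness step of Theorem~\ref{thm:construction} also uses $a\in X^{mtr}$ directly and not only Lemma~\ref{lem:regional-sets123}; this does not affect the argument, because you apply that theorem as a black box.
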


\begin{proof}
Define $H:\G\to X$ by $H(c,c)=h$. Since
$\G=\{(c,c)\}$, the function $H$ is automatically commutative and
increasing. Equation~\eqref{eq:singleton-fibres} is precisely
condition \textup{(C2)} of Theorem~\ref{thm:construction}, whereas
\eqref{eq:singleton-lower} and~\eqref{eq:singleton-upper} are its
conditions \textup{(C3)} and \textup{(C4)}, respectively. The only
instance of \textup{(C5)} is
\[
\Lambda(H(c,c),c)=\Lambda(h,c)
=\Lambda(c,h)=\Lambda(c,H(c,c)),
\]
which follows from the symmetry of
$\Lambda$. Thus all the hypotheses of Theorem~\ref{thm:construction} are
satisfied, and~\eqref{eq:singleton-core} is exactly the corresponding
specialization of~\eqref{eq2}. The conclusion follows.
\end{proof}

When \(I_a^3=\{c\}\), the function \(H:I_a^3\times I_a^3\to X\) is completely determined by the single value \(h=H(c,c)\).
The following proposition characterizes
exactly which values of $h$ are admissible and, consequently, reduces
the existence, uniqueness, and enumeration of nullnorms with
fixed \(S\), \(T\), \(f\), and \(g\) to the study of an explicitly defined subset of $X$.

\begin{proposition}
\label{prop:singleton}
Let $a\in K\setminus\{0,1\}$ and suppose $I_a^3=\{c\}$.
Fix a t-conorm $S$ on $[0,a]$, a t-norm $T$ on $[a,1]$, and
order-preserving maps $f:X\to[a,1]$ and $g:X\to[0,a]$ satisfying
\textup{(C1)}.
Define $E_c$ to be the set of $h\in X$ satisfying
\eqref{eq:singleton-fibres}--\eqref{eq:singleton-upper}.
The nullnorms on $\mathbb P$ with absorbing element $a$ whose induced
data are the prescribed $S,T,f,g$ are in bijection with $E_c$, via
$h=H(c,c)$.
\end{proposition}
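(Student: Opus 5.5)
I will exhibit two mutually inverse maps: one from the set of nullnorms with the prescribed induced data $S,T,f,g$ into $E_c$, and one from $E_c$ back to that set. Write $\mathcal N(S,T,f,g)$ for the set of nullnorms $V$ on $\mathbb P$ with absorbing element $a$ such that
\[
V|_{[0,a]^2}=S,\qquad V|_{[a,1]^2}=T,\qquad V(\cdot,1)=f,\qquad V(\cdot,0)=g.
\]
Define $\Phi:\mathcal N(S,T,f,g)\to X$ by $\Phi(V)=V(c,c)$.

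\textbf{$\Phi$ maps into $E_c$.} Let $V\in\mathcal N(S,T,f,g)$. By the implication $(1)\Rightarrow(2)$ of Theorem~\ref{thm:representation}, $V$ is represented by the recovered components $S,T,f,g$ and $H=V|_{\G}$. Here $\G=\{(c,c)\}$, so $H(c,c)=V(c,c)=h$. The same theorem shows that (C2), (C3) and (C4) hold for these components.
\begin{itemize}
\item[(a)] Written out for $\G=\{(c,c)\}$, condition (C2) is exactly \eqref{eq:singleton-fibres}.
\item[(b)] Condition (C3) with $(y,t)=(c,c)$ is exactly \eqref{eq:singleton-lower}.
\item[(c)] Condition (C4) with $(x,z)=(c,c)$ is exactly \eqref{eq:singleton-upper}.
\end{itemize}
Hence $h\in E_c$.

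\textbf{A map from $E_c$ back.} For $h\in E_c$, let $V_h$ be the operation defined by \eqref{eq:singleton-core}. Since $a\in K\setminus\{0,1\}$, Theorem~\ref{thm:singleton-core} shows that $V_h$ is a nullnorm with absorbing element $a$. The uniqueness part of Theorem~\ref{thm:representation} then identifies its induced data: its interval restrictions are $S$ and $T$, its boundary maps $x\mapsto V_h(x,1)$ and $x\mapsto V_h(x,0)$ are $f$ and $g$, and its restriction to $\G$ is $H$ with $H(c,c)=h$. Concretely, the formula together with (C1) and Proposition~\ref{prop:functions} gives $V_h(x,1)=T(f(x),1)=f(x)$ and $V_h(x,0)=g(x)$ for all $x$, including the case $x=c$, because $(c,1)\in\N$ and $(c,0)\in\M$. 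Therefore $V_h\in\mathcal N(S,T,f,g)$ and $\Phi(V_h)=h$, so $\Phi$ is surjective onto $E_c$.

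\textbf{Injectivity.} Suppose $V,V'\in\mathcal N(S,T,f,g)$ have the same value at $(c,c)$. By Theorem~\ref{thm:representation}, each is given by formula \eqref{eq2} using its induced components. These components coincide, since $H$ is determined by the single value at $(c,c)$. Hence $V=V'$.

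The only delicate point is making sure that the $f$ and $g$ induced by $V_h$ are exactly the prescribed ones, including at $x=c$. This follows from the region memberships $(c,1)\in\N$ and $(c,0)\in\M$ noted above, so $\Phi$ is a bijection from $\mathcal N(S,T,f,g)$ onto $E_c$.
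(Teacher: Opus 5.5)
Your proposal is correct and follows essentially the same route as the paper: $V(c,c)\in E_c$ comes from the necessity part of Theorem~\ref{thm:representation}, namely (C2)--(C4) at the single point $(c,c)$. The converse comes from the construction; you pass through Theorem~\ref{thm:singleton-core}, which is exactly the paper's specialization of Theorem~\ref{thm:construction}, and the recovery formulas make the two assignments mutually inverse. Your explicit check that $V_h(x,1)=f(x)$ and $V_h(x,0)=g(x)$ for all $x$, using $(x,1)\in\N$ and $(x,0)\in\M$, spells out a step the paper only asserts.
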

\begin{proof}
A function on the singleton $I_a^3\times I_a^3$ is automatically commutative and
increasing. If $h\in E_c$, define $H(c,c)=h$.
Equation~\eqref{eq:singleton-fibres} is precisely (C2), while
\eqref{eq:singleton-lower} and~\eqref{eq:singleton-upper} are precisely
(C3) and (C4). The sole instance of (C5) is
$\Lambda(h,c)=\Lambda(c,h)$, which follows from symmetry of the
piecewise definition. Theorem~\ref{thm:construction} therefore
produces a nullnorm with the prescribed \(S\), \(T\), \(f\) and \(g\).

Conversely, let a nullnorm have these $S,T,f,g$, and put
$h=H(c,c)$. The necessity part of
Theorem~\ref{thm:representation} gives (C2)--(C4), and hence
$h\in E_c$. The two assignments are inverse
because  \(H\) recovered from a nullnorm \(V\) is precisely the restriction \(V|_{I_a^3\times I_a^3}\).
\end{proof}

\begin{remark}
In the singleton case, the membership of the nullnorm determined by
$h\in E_c$ in the three subclasses is characterized as follows.
\begin{enumerate}
\item[(1)] The nullnorm $V$ belongs to $\mathcal V^{123}$ precisely when
$h\in I_a^1\cup I_a^2\cup\{c\}$.
\item[(2)] The nullnorm $V$ belongs to $\mathcal V_a$ precisely when
$h\notin I_a^1\cup I_a^2$.
\item[(3)] The nullnorm $V$ belongs to $\mathcal V_{cwa}$ precisely when
$h\in[0,a]\cup[a,1]$.
\end{enumerate}
\end{remark}




\subsection{The Subclasses
\texorpdfstring{$\mathcal V_a$ and $\mathcal V^{123}$}{Va and V123}}
\label{sec:subclasses}




We first reformulate Theorem~\ref{thm:construction} by partitioning
$\G$ according to the values attained by $H$ and  then restate
Theorem~4.1 of Xiu and Zheng~\cite{Xiu2025} using the notation adopted
in this manuscript. Together, these reformulations provide a unified
basis for the subsequent analysis of the subclasses $\mathcal{V}_a$
and $\mathcal{V}^{123}$.

\begin{theorem}
\label{thm:partitioned}
Let $\mathbb P$ be a bounded trellis and
$a\in X\setminus\{0,1\}$.
Let $S:[0,a]^2\to[0,a]$ be a t-conorm,
$T:[a,1]^2\to[a,1]$ a t-norm, $f:X\to[a,1]$ and
$g:X\to[0,a]$ order-preserving maps, and let $H:\G\to X$ be a
commutative and increasing function. Let
$\Lambda:\mathcal L_a\to X$ be the mixed interaction function given
by~\eqref{eq1}. Define
\[
\mathcal A_0=H^{-1}([a,1]),\qquad
\mathcal B_0=H^{-1}([0,a]\setminus\{a\}),
\]
\[
\mathcal D_i=H^{-1}(I_a^i)\quad(i=1,2,3),
\qquad
Q_i=H|_{\mathcal D_i}:\mathcal D_i\to I_a^i.
\]
Then
$
\G=\mathcal A_0\mathbin{\cup}\mathcal B_0
\mathbin{\cup}\mathcal D_1
\mathbin{\cup}\mathcal D_2
\mathbin{\cup}\mathcal D_3,
$
and the sets $\mathcal{A}_0$, $\mathcal{B}_0$  and  $\mathcal{D}_i$  are symmetric because $H$ is commutative.
Since each $Q_i$ is the restriction of $H$ to $\mathcal{D}_i$, the
commutativity and increasingness of $H$ imply that $Q_i$ is also
commutative and increasing.
Assume
that conditions~\textup{(C1)--(C5)} of
Theorem~\ref{thm:construction} hold. Then  $a\in K\setminus\{0,1\}$ if and only if the operation
$V:X^2\to X$ defined by
\begin{equation}\label{eq4}
V(x,y) =
\begin{cases}
S(g(x),g(y)), & (x,y)\in\M\cup\mathcal{B}_0,\\
T(f(x),f(y)), & (x,y)\in\N\cup\mathcal{A}_0,\\
Q_i(x,y), & (x,y)\in\mathcal{D}_i,\; i=1,2,3,
\end{cases}
\end{equation}
is a nullnorm on $\mathbb P$ with absorbing element $a$.

\end{theorem}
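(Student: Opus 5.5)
The plan is to reduce the statement to Theorem~\ref{thm:construction} by showing that formula~\eqref{eq4} defines exactly the same operation as formula~\eqref{eq2}. The partition claim comes first. By Lemma~\ref{lem:regional-sets123} (for $a\in K\setminus\{0,1\}$), $X$ is the union of $[0,a]$, $[a,1]$, $I_a^1$, $I_a^2$ and $I_a^3$. Two of these pieces overlap only in $a$, and that happens only for the intervals, which is why $\mathcal B_0$ excludes $a$. Taking preimages under $H$ then gives the displayed decomposition of $\G$ into pairwise disjoint pieces. If one wants the partition without assuming $a\in K$, note that $I_a^1\cap I_a^2$ may be nonempty; then only the covering is needed, and consistency is handled in the next step. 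Symmetry of each preimage follows from $H(x,y)=H(y,x)$. Commutativity and increasingness of $Q_i$ are inherited by restriction, because the componentwise order and the symmetry are tested inside the subdomain.

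The main step is to show that the right-hand side of~\eqref{eq4} equals $H(x,y)$ for every $(x,y)\in\G$; outside $\G$ the two formulas coincide verbatim.
\begin{itemize}
\item On $\mathcal D_i$ this is immediate, since $Q_i=H|_{\mathcal D_i}$.
\item On $\mathcal B_0$, write $h=H(x,y)\in[0,a]$. Condition (C1) gives $g(h)=h$, and (C2) gives $S(g(x),g(y))=g(h)=h$.
\item On $\mathcal A_0$, dually $h\in[a,1]$, and (C1) and (C2) give $T(f(x),f(y))=f(h)=h$.
\end{itemize}
Well-definedness of~\eqref{eq4} on overlaps such as $\mathcal B_0\cap\M$ is vacuous, since $\G$ is disjoint from $\M\cup\N$. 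If some of the $\mathcal D_i$ overlap (the case $I_a^1\cap I_a^2\neq\varnothing$), all the relevant formulas equal $H$, so they agree. Hence~\eqref{eq4} and~\eqref{eq2} define the same $V$.

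With this identity, both directions of the equivalence are exactly those of Theorem~\ref{thm:construction}, applied to the same data $S,T,f,g,H$ satisfying (C1)--(C5). If $a\in K\setminus\{0,1\}$, then $V$ is a nullnorm. Conversely, if $V$ is a nullnorm with absorbing element $a$, Proposition~\ref{propp14} gives $a\in K$, and $a\neq 0,1$ by hypothesis.

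The only delicate point is the bookkeeping in the partition: the boundary value $a$ must lie in exactly one interval preimage, and one must check that the interval branches reproduce $H$ via (C2) and not only via the defining branches. The rest is routine.
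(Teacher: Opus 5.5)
Your proposal is correct and follows the paper's proof. On $\mathcal A_0$ and $\mathcal B_0$ you use (C1) and (C2) to show that the interval branches return $H(x,y)$, and on $\mathcal D_i$ you use $Q_i=H|_{\mathcal D_i}$; hence \eqref{eq4} coincides with \eqref{eq2}, and both directions are inherited from Theorem~\ref{thm:construction}. Your extra bookkeeping on disjointness, where $a\in K$ is needed only to get $I_a^1\cap I_a^2=\varnothing$, and on well-definedness over overlaps is a harmless refinement that the paper leaves implicit.
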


\begin{proof}
For $(x,y)\in\mathcal A_0$, conditions~\textup{(C1)} and
\textup{(C2)} give
$
H(x,y)=f(H(x,y))=T(f(x),f(y)).
$
For $(x,y)\in\mathcal B_0$, the same conditions give
$
H(x,y)=g(H(x,y))=S(g(x),g(y)).
$
On $\mathcal D_i$, we have $Q_i=H$. Consequently,
formula~\eqref{eq4} coincides with formula~\eqref{eq2}, and the
assertion follows from Theorem~\ref{thm:construction}.
\end{proof}



\begin{theorem}[\cite{Xiu2025}]
\label{thm:va-construction}
%
Let $\mathbb P=(X,\unlhd,\wedge,\vee,0,1)$ be a bounded
trellis and let $a\in X\setminus\{0,1\}$. Let
$S:[0,a]^2\to[0,a]$ be a t-conorm and
$T:[a,1]^2\to[a,1]$ a t-norm. Let
$\mathcal A,\mathcal B\subseteq\G$ be such that
$\M\cup\mathcal B$ is a symmetric lower subset of $X^2$ and
$\N\cup\mathcal A$ is a symmetric upper subset of $X^2$. Define
$
\mathcal D_3=\G\setminus(\mathcal A\cup\mathcal B)
$
and suppose that $\mathcal D_3$ is symmetric. Let
$f:X\to[a,1]$ and $g:X\to[0,a]$ be order-preserving maps, and let
$
H_1:\mathcal D_3\to I_a^3
$
be commutative and increasing.
Assume that for all $x,y,z,t\in X$, the following conditions hold:
\begin{itemize}
    \item[$(\mathrm{i})$] $(x,y)\in \mathcal{A} \Rightarrow S(g(x),g(y))=a$;

    \item[$(\mathrm{ii})$] $(x,y)\in \mathcal{B} \Rightarrow T(f(x),f(y))=a$;

    \item[$(\mathrm{iii})$] $f(x)=x$ for all $x\in[a,1]$, and
    $g(x)=x$ for all $x\in[0,a]$;

    \item[$(\mathrm{iv})$] For all $(x,y)\in\mathcal D_3$,
  $
    S(g(x),g(y))=g(H_1(x,y)),\
    T(f(x),f(y))=f(H_1(x,y));
   $

    \item[$(\mathrm{v})$] If $(x,z)\preccurlyeq(y,t)$,
    $(x,z)\in\M\cup\mathcal B$, and $(y,t)\in\mathcal D_3$, then
   $
    S(g(x),g(z))\unlhd H_1(y,t);
   $

    \item[$(\mathrm{vi})$] If $(x,z)\preccurlyeq(y,t)$,
    $(x,z)\in\mathcal D_3$, and $(y,t)\in\N\cup\mathcal A$, then
  $
    H_1(x,z)\unlhd T(f(y),f(t));
    $

    \item[$(\mathrm{vii})$] If $(x,y),(y,z)\in\mathcal D_3$, then
   $
    (H_1(x,y),z)\in\mathcal D_3
    \ \Leftrightarrow \
    (x,H_1(y,z))\in\mathcal D_3.
   $
    \item[$(\mathrm{viii})$] The function $H_1$ is conditionally
    associative: if
    $(x,y),(y,z),(H_1(x,y),z)\in\mathcal D_3$, then
    \[
    H_1(H_1(x,y), z) = H_1(x, H_1(y,z)).
    \]
\end{itemize}
Then $a\in K$ if and only if the operation
$V:X^2\to X$ defined by
\begin{equation}\label{eq6}
V(x,y) =
\begin{cases}
S(g(x),g(y)), & (x,y)\in\M\cup\mathcal{B}, \\
T(f(x),f(y)), & (x,y)\in\N\cup\mathcal{A}, \\
H_1(x,y), & (x,y)\in\mathcal{D}_3,
\end{cases}
\end{equation}
is a nullnorm on $\mathbb P$ with absorbing element $a$.
\end{theorem}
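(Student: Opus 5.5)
Since the statement is a reformulation of Theorem~4.1 of~\cite{Xiu2025}, I would derive it from Theorem~\ref{thm:construction} rather than reprove it from the axioms. The key step is to build a single commutative, increasing function $H:\G\to X$ from the given data and check that, under (i)--(viii), it satisfies (C1)--(C5). Then~\eqref{eq6} coincides with~\eqref{eq2}, and the equivalence with $a\in K$ follows from Theorem~\ref{thm:construction}, whose converse direction in any case only uses Proposition~\ref{propp14}.

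\emph{Construction of $H$.} Set $H=H_1$ on $\mathcal D_3$, $H(x,y)=T(f(x),f(y))$ on $\mathcal A$, and $H(x,y)=S(g(x),g(y))$ on $\mathcal B$. Well-definedness on $\mathcal A\cap\mathcal B$ comes from (i) and (ii): both values are then $a$. Commutativity follows from the symmetry of $\mathcal A$, $\mathcal B$ (as restrictions of the symmetric sets $\N\cup\mathcal A$ and $\M\cup\mathcal B$ to $\G$) and of $\mathcal D_3$. Once this $H$ is fixed, formula~\eqref{eq6} is literally~\eqref{eq2}.

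\emph{Conditions (C1)--(C4) and increasingness of $H$.} (C1) is (iii). For (C2): on $\mathcal D_3$ it is (iv). On $\mathcal A$, $H$ takes values in $[a,1]$, so $f(H)=H=T(f(x),f(y))$, while $g(H)=a=S(g(x),g(y))$ by (i) and Proposition~\ref{prop:functions}; $\mathcal B$ is dual.

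For increasingness of $H$ and for (C3), (C4), I would redo the case analysis from the increasingness part of the proof of Theorem~\ref{thm:construction}, now with the regions $\M\cup\mathcal B$, $\N\cup\mathcal A$ and $\mathcal D_3$. The lower- and upper-set hypotheses forbid comparisons going from $\mathcal D_3$ down into $\M\cup\mathcal B$, or from $\N\cup\mathcal A$ down into $\mathcal D_3$. The transitions from $\M\cup\mathcal B$ to $\N\cup\mathcal A$ pass through $a$ and are handled by $a\in X^{mtr}$. The remaining cross transitions are exactly (v) and (vi), and transitions inside one region use the monotonicity of $S,T,f,g,H_1$. (C3) and (C4) are then special cases of this increasingness.

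\emph{Condition (C5), the main obstacle.} Fix $x,y,z\in I_a^3$. Because $H(x,y)$ may fall in $[0,a]\cup[a,1]$ or $I_a^3$, I would prove (C5) in two cases. First, suppose $(x,y),(y,z)\in\mathcal D_3$ and at least one of $(H_1(x,y),z)$, $(x,H_1(y,z))$ lies in $\mathcal D_3$; by (vii) both do, and (viii) gives the equality. Otherwise, both outer $\Lambda$-values are computed by an $S$-branch or a $T$-branch. Then I would reuse the trick from the associativity proof of Theorem~\ref{thm:construction}. Both sides lie in $X\setminus I_a$; by (C2) and~\eqref{eq3}-type identities their $g$-images equal $S(S(g(x),g(y)),g(z))$ and their $f$-images equal the analogous $T$-expression. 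Associativity of $S,T$ together with~\eqref{eq7} then forces equality.

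The delicate point is that~\eqref{eq3} must now be available for the $\Lambda$-values and not merely for $V$. In particular, when an intermediate pair lands in $\mathcal D_3$ on one side only, (vii) must be invoked to exclude mixed cases in which one side is $H_1$-valued in $I_a^3$ and the other is not.
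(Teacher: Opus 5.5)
\paragraph{Verdict.} Your reduction to Theorem~\ref{thm:construction} is a different route from the paper, which gives no proof of this statement and only quotes it from earlier work. It works up to (C5), where there is a genuine but fixable gap.

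\paragraph{What works.} The glued $H$ is well defined by (i)--(ii). Condition (C2) on $\mathcal A$ and $\mathcal B$ follows from (i)--(iii) and~\eqref{eq:collapse}. The three-region case analysis does give increasingness of~\eqref{eq6}, hence of $H$, and (C3)--(C4).

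\paragraph{The gap in (C5).} Your claim ``otherwise, both outer $\Lambda$-values are computed by an $S$-branch or a $T$-branch'' is unjustified when exactly one inner pair lies in $\mathcal D_3$. Take $(x,y)\in\mathcal A\cup\mathcal B$ and $(y,z)\in\mathcal D_3$. Then $\Lambda(H(x,y),z)\in[0,a]\cup[a,1]$. But $w=H_1(y,z)\in I_a^3$, so $(x,w)\in\G$. Nothing you have said prevents $(x,w)\in\mathcal D_3$, in which case the right-hand side is $H_1(x,w)\in I_a^3$. (C5) would then simply be false, and no $f$/$g$ argument via~\eqref{eq3} and~\eqref{eq7} can rescue it. Your appeal to (vii) does not exclude this configuration. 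Condition (vii) presupposes that both $(x,y)$ and $(y,z)$ lie in $\mathcal D_3$, which is exactly the case you had already handled.

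\paragraph{The missing step.} Conditions (i) and (ii), combined with the monotonicity conditions, make this configuration impossible.
\begin{itemize}
\item Suppose $(x,y)\in\mathcal A$. Apply (v) to $(y,0)\preccurlyeq(y,z)$ to get $g(y)=S(g(y),g(0))\unlhd w$. Then $(x,g(y))\in\M$ and $(x,g(y))\preccurlyeq(x,w)$. If $(x,w)\in\mathcal D_3$, then (v), (iii) and (i) give $a=S(g(x),g(y))\unlhd H_1(x,w)$, contradicting $H_1(x,w)\in I_a^3$.
\item Dually, suppose $(x,y)\in\mathcal B$. Condition (vi) applied to $(y,z)\preccurlyeq(y,1)$ gives $w\unlhd f(y)$. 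Then (vi) applied to $(x,w)\preccurlyeq(x,f(y))\in\N$ gives $H_1(x,w)\unlhd T(f(x),f(y))=a$ by (iii) and (ii). This is again impossible.
\item The case $(x,y)\in\mathcal D_3$, $(y,z)\in\mathcal A\cup\mathcal B$ follows by commutativity and the symmetry of $\mathcal D_3$.
\end{itemize}
With this lemma in place, every case not covered by (vii)--(viii) has both outer pairs in $\M\cup\N\cup\mathcal A\cup\mathcal B$. Your $f$/$g$ argument then completes (C5).
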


Theorem~\ref{thm:partitioned} uses the canonical regions determined by
the range of $H$, whereas Theorem~\ref{thm:va-construction} starts from
chosen branch domains $\mathcal A$ and $\mathcal B$, which may overlap.
The precise relationship between the two parametrizations is given next.

\begin{proposition}
\label{prop:degeneration}
At the level of the resulting operations, Theorem~\ref{thm:va-construction}
is exactly the $\mathcal V_a$ specialization of
Theorem~\ref{thm:partitioned}. More precisely, let $V$ be the operation
constructed in Theorem~\ref{thm:partitioned} and suppose that
$V\in\mathcal V_a$. Then
$
\mathcal D_1=\mathcal D_2=\varnothing.
$
With the choices
\[
\mathcal A=\mathcal A_0,\qquad
\mathcal B=\mathcal B_0,\qquad
H_1=Q_3=H|_{\mathcal D_3},
\]
the components satisfy all the hypotheses and conditions
\textup{(i)}--\textup{(viii)} of
Theorem~\ref{thm:va-construction}, and formulas~\eqref{eq4}
and~\eqref{eq6} define the same operation.

Conversely, if the components  in Theorem~\ref{thm:va-construction} yield a
nullnorm $V$, then $V\in\mathcal V_a$. The canonical function
$\widehat H=V|_{\G}$ and its range partition provide the components  satisfying
Theorem~\ref{thm:partitioned}, with
$\mathcal D_1=\mathcal D_2=\varnothing$, and again give the same
operation $V$.
\end{proposition}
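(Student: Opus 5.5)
The proof splits into the two directions stated in Proposition~\ref{prop:degeneration}. In both, $\mathcal V_a$-membership is translated into emptiness of $\mathcal D_1,\mathcal D_2$. Theorem~\ref{thm:partitioned} and Theorem~\ref{thm:representation} then let us pass between the canonical data $(S,T,f,g,H)$ and the operation $V$.

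\textbf{Forward direction.} Let $V$ come from Theorem~\ref{thm:partitioned} and assume $V\in\mathcal V_a$. Since $V|_{\G}=H$, no value of $H$ lies in $I_a^1\cup I_a^2$, so $\mathcal D_1=\mathcal D_2=\varnothing$ and $\G=\mathcal A_0\cup\mathcal B_0\cup\mathcal D_3$. We set $\mathcal A=\mathcal A_0$, $\mathcal B=\mathcal B_0$ and $H_1=Q_3$, and check the hypotheses of Theorem~\ref{thm:va-construction} in the following order.

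\begin{enumerate}
\item \emph{Symmetry.} $\mathcal A_0$, $\mathcal B_0$ and $\mathcal D_3$ are symmetric because $H$ is commutative.
\item \emph{Lower/upper sets.} We show $\M\cup\mathcal B_0=V^{-1}([0,a]\setminus\{a\})\cup\M$ is a lower set. Take $(u,v)\preccurlyeq(x,y)$ with $(x,y)\in\mathcal B_0$.
 \begin{itemize}
 \item If $(u,v)\in\M$, there is nothing to show.
 \item Otherwise $(u,v)\in\N\cup\G$. Membership in $\N$ is impossible, since $\N$ is an upper set disjoint from $\G$.
 \item So $(u,v)\in\G$. By monotonicity $V(u,v)\unlhd V(x,y)\lhd a$, with $V(x,y)\in[0,a]$.
 \item $V(u,v)\neq a$ by antisymmetry. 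Since $[0,a]\cup I_a^1$ is a lower set by Lemma~\ref{lem:regional-sets123}, $V(u,v)\in[0,a]\cup I_a^1$.
 \item $V(u,v)\notin I_a^1$ because $V\in\mathcal V_a$, so $(u,v)\in\mathcal B_0$.
 \end{itemize}
 The argument that $\N\cup\mathcal A_0$ is an upper set is dual, using that $[a,1]\cup I_a^2$ is an upper set.
\item \emph{Conditions (i)--(ii).} On $\mathcal A_0$, (C2) gives $S(g(x),g(y))=g(H(x,y))=a$, because $H(x,y)\in[a,1]$ and Proposition~\ref{prop:functions} applies. Condition (ii) is dual.
\item \emph{Conditions (iii)--(iv).} These are (C1) and (C2) restricted to $\mathcal D_3$.
\item \emph{Conditions (v)--(vi).} These follow from increasingness of $V$, since on $\mathcal B_0$ we have $V=S(g,g)$ and on $\mathcal A_0$ we have $V=T(f,f)$, as in the proof of Theorem~\ref{thm:partitioned}.
\end{enumerate}

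\textbf{Main obstacle: conditions (vii)--(viii).} These are the only places where (C5) must be unpacked. Take $(x,y),(y,z)\in\mathcal D_3$. Then $H(x,y),H(y,z)\in I_a^3$, so
\[
\Lambda(H(x,y),z)=V(V(x,y),z)=V(x,V(y,z))=\Lambda(x,H(y,z)).
\]
\begin{itemize}
\item \emph{Condition (vii).} The pair $(H(x,y),z)$ lies in $\mathcal D_3$ exactly when the common value $w:=V(V(x,y),z)$ lies in $I_a^3$. Both pairs lie in $\G$, and $\G$ is disjoint from $\M\cup\N$. So if either pair were outside $\G$, its $V$-value would lie in $[0,a]\cup[a,1]$, forcing $w\notin I_a^3$. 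If $w\in I_a^3$, both pairs are therefore in $\G$ with $H$-value $w$, hence both are in $\mathcal D_3$. Conversely, membership of one pair in $\mathcal D_3$ forces $w\in I_a^3$, which forces the other. This gives the biconditional.
\item \emph{Condition (viii).} This is then immediate, since $H_1=V$ on $\mathcal D_3$.
\end{itemize}
Formulas \eqref{eq4} and \eqref{eq6} coincide because the index sets and branches agree.

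\textbf{Converse.} Suppose the components of Theorem~\ref{thm:va-construction} yield a nullnorm $V$. Its values come from the $S$-branch, which lies in $[0,a]$; the $T$-branch, which lies in $[a,1]$; or $H_1$, which lies in $I_a^3$. None of these meets $I_a^1\cup I_a^2$, so $V\in\mathcal V_a$. Apply Theorem~\ref{thm:representation} to get canonical $S',T',f',g'$ and $\widehat H=V|_{\G}$ satisfying (C1)--(C5). Its range avoids $I_a^1\cup I_a^2$, so the partition of Theorem~\ref{thm:partitioned} has $\mathcal D_1=\mathcal D_2=\varnothing$. Equation \eqref{eq4} reproduces $V$ because it coincides with \eqref{eq2}. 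The recovered $S',T',f',g'$ agree with the original $S,T,g,f$ by the uniqueness clause of Theorem~\ref{thm:representation}, given (iii) and Proposition~\ref{prop:functions}. The canonical $\mathcal A_0,\mathcal B_0$ may differ from the chosen $\mathcal A,\mathcal B$, since the chosen sets may overlap; only equality of the operations is claimed.
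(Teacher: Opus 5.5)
Your proposal is correct and follows essentially the same route as the paper's proof. You obtain $\mathcal D_1=\mathcal D_2=\varnothing$ from $V|_{\G}=H$, and you show $\M\cup\mathcal B_0$ is lower by the same case analysis: the lower set $[0,a]\cup I_a^1$, the $\mathcal V_a$ range restriction, and antisymmetry, which correctly avoids transitivity. Conditions (i)--(vi) come from (C1), (C2), \eqref{eq:collapse} and increasingness, and (vii)--(viii) from associativity once both outer pairs lie in $\G$. The converse uses the range of \eqref{eq6} together with the necessity part of Theorem~\ref{thm:representation}.

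The differences are cosmetic. Your (vii) argument contains a vacuous remark about pairs outside $\G$. In the converse, you identify the recovered components with the original ones via the uniqueness clause rather than reading them off directly from \eqref{eq6} and (iii), which amounts to the same computation.
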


\begin{proof}
The proof of Theorem~\ref{thm:partitioned} shows that
$V|_{\G}=H$. Hence, if $\mathcal D_i$ were nonempty for $i=1$ or
$i=2$, then $V$ would take a value in $I_a^i$, contrary to
$V\in\mathcal V_a$. Thus
$\mathcal D_1=\mathcal D_2=\varnothing$, and the canonical partition
reduces to
$
\G=\mathcal A_0\mathbin{\cup}\mathcal B_0
\mathbin{\cup}\mathcal D_3.
$

We next verify the order requirements in
Theorem~\ref{thm:va-construction}. The set
$\M\cup\mathcal B_0$ is lower. Indeed, let
$p\in\M\cup\mathcal B_0$ and $q\preccurlyeq p$. If $p\in\M$, then
$q\in\M$ because $\M$ is lower. Suppose that
$p\in\mathcal B_0$. If $q\notin\G$, then $q$ cannot belong only to
$\N$, since $\N$ is upper and this would imply $p\in\N$, contrary to
$p\in\G$; hence $q\in\M$. If $q\in\G$, monotonicity gives
\[
H(q)=V(q)\unlhd V(p)=H(p)\in[0,a]\setminus\{a\}.
\]
Lemma~\ref{lem:regional-sets123} and the defining range restriction of
$\mathcal V_a$ imply that $H(q)\in[0,a]$. The value $H(q)=a$ is
impossible by antisymmetry, and therefore $q\in\mathcal B_0$.
Consequently, $\M\cup\mathcal B_0$ is lower. The dual argument shows
that $\N\cup\mathcal A_0$ is upper. These sets are symmetric, and so is
$\mathcal D_3$.

Set $\mathcal A=\mathcal A_0$, $\mathcal B=\mathcal B_0$, and
$H_1=Q_3$. If $(x,y)\in\mathcal A$, then
\textup{(C2)} and~\eqref{eq:collapse} yield
$
S(g(x),g(y))=g(H(x,y))=a.
$
Similarly, if $(x,y)\in\mathcal B$, then
$T(f(x),f(y))=f(H(x,y))=a$. Hence conditions~\textup{(i)} and
\textup{(ii)} hold. Condition~\textup{(iii)} is \textup{(C1)}, while
condition~\textup{(iv)} is the restriction of \textup{(C2)} to
$\mathcal D_3$. Conditions~\textup{(v)} and \textup{(vi)} follow
directly from the increasingness of $V$ and its three branch formulas.

For conditions~\textup{(vii)} and \textup{(viii)}, let
$(x,y),(y,z)\in\mathcal D_3$. Then
$H_1(x,y),H_1(y,z)\in I_a^3$, so both outer pairs in
\[
V(H_1(x,y),z)=V(x,H_1(y,z))
\]
belong to $\G$. On $\G$, the value of $V$ belongs to $I_a^3$ exactly
when the relevant pair belongs to $\mathcal D_3$. Associativity of $V$
therefore gives
\[
(H_1(x,y),z)\in\mathcal D_3
 \Leftrightarrow \
(x,H_1(y,z))\in\mathcal D_3,
\]
which is \textup{(vii)}. When these equivalent memberships hold, the
same identity becomes
\[
H_1(H_1(x,y),z)=H_1(x,H_1(y,z)),
\]
which is \textup{(viii)}. Since
$\mathcal D_1=\mathcal D_2=\varnothing$, formulas~\eqref{eq4}
and~\eqref{eq6} coincide.

Conversely, suppose that Theorem~\ref{thm:va-construction} yields a
nullnorm $V$. Formula~\eqref{eq6} shows that
$
V(X^2)\cap(I_a^1\cup I_a^2)=\varnothing,
$
so $V\in\mathcal V_a$. Put $\widehat H=V|_{\G}$ and define its
canonical range partition as in Theorem~\ref{thm:partitioned}. Then
$\mathcal D_1=\mathcal D_2=\varnothing$. Moreover,
formula~\eqref{eq6} and condition~\textup{(iii)} recover
\[
S=V|_{[0,a]^2},\quad T=V|_{[a,1]^2},\quad
g(x)=V(x,0),\quad f(x)=V(x,1).
\]
The necessity part of Theorem~\ref{thm:representation}, applied with
$H=\widehat H$, now gives conditions~\textup{(C1)--(C5)}. Thus these components satisfy Theorem~\ref{thm:partitioned} and reproduce the
same operation $V$.
\end{proof}

\begin{remark}
The equivalence in Proposition~\ref{prop:degeneration} concerns the
resulting nullnorms, not the raw parameter sets. Theorem~\ref{thm:partitioned}
uses the unique partition induced by the range of $H$, whereas
Theorem~\ref{thm:va-construction} permits chosen branch domains
$\mathcal A$ and $\mathcal B$ that may overlap. Different such choices
can therefore determine the same nullnorm; passing to
$\widehat H=V|_{\G}$ gives the canonical normalization.
\end{remark}

By definition, $V\in\mathcal V^{123}$ implies
$V(I_a^3\times I_a^3)\subseteq I_a^1\cup I_a^2\cup I_a^3$.
Accordingly, in Theorem~\ref{thm:partitioned} we take
$H:\G\to I_a^1\cup I_a^2\cup I_a^3$, so that
$\mathcal A_0\cup\mathcal B_0=\varnothing$. This yields the following
construction theorem for $\mathcal V^{123}$.


\begin{theorem}
\label{thm:v123-construction}
Let $\mathbb P$ be a bounded trellis and
$a\in X\setminus\{0,1\}$.
Let $S:[0,a]^2\to[0,a]$ be a t-conorm,
$T:[a,1]^2\to[a,1]$ a t-norm, $f:X\to[a,1]$ and
$g:X\to[0,a]$ order-preserving maps, and
$H:\G\to I_a^1\cup I_a^2\cup I_a^3$ a commutative and increasing function.
Let $\Lambda:\mathcal L_a\to X$ be the mixed interaction function
given by~\eqref{eq1}.
For $i=1,2,3$, define
\[
\mathcal D_i=H^{-1}(I_a^i),\qquad
Q_i=H|_{\mathcal D_i}:\mathcal D_i\to I_a^i.
\]
Then $\G=\mathcal D_1\mathbin{\cup}\mathcal D_2
\mathbin{\cup}\mathcal D_3$, and
$Q_i$ are commutative and
increasing for $i=1,2,3$.
Assume that conditions~\textup{(C1)--(C5)} of
Theorem~\ref{thm:construction} hold.
Then  $a\in K$  if and only if  the operation
$V:X^2\to X$ defined by
\begin{equation}\label{eq9}
V(x,y)=
\begin{cases}
S(g(x),g(y)), & (x,y)\in\M,\\
T(f(x),f(y)), & (x,y)\in\N,\\
Q_i(x,y), & (x,y)\in\mathcal{D}_i,\; i=1,2,3,
\end{cases}
\end{equation}
is a nullnorm on $\mathbb P$ with absorbing element $a$, and
$V\in\mathcal V^{123}$.
\end{theorem}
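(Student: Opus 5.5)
The plan is to reduce Theorem~\ref{thm:v123-construction} to Theorem~\ref{thm:partitioned}, specialized to the case where $H$ avoids $[0,a]\cup[a,1]$. First I would record the partition claim. Since $H$ takes values in $I_a^1\cup I_a^2\cup I_a^3$, the sets $\mathcal A_0=H^{-1}([a,1])$ and $\mathcal B_0=H^{-1}([0,a]\setminus\{a\})$ are empty. Lemma~\ref{lem:regional-sets123} gives $I_a=I_a^1\cup I_a^2\cup I_a^3$, and the sets $I_a^1$, $I_a^2$, $I_a^3$ are pairwise disjoint: $I_a^3$ is disjoint from the other two by definition, and $I_a^1\cap I_a^2=\varnothing$ by the lemma, at least once $a\in K$. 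Hence $\G=\mathcal D_1\cup\mathcal D_2\cup\mathcal D_3$. Each $Q_i$ is commutative and increasing because it is a restriction of $H$ to a symmetric subset, and symmetry of $\mathcal D_i$ follows from commutativity of $H$.

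Next, regarding $H$ as a function $\G\to X$, all hypotheses of Theorem~\ref{thm:partitioned} (equivalently Theorem~\ref{thm:construction}) are satisfied with (C1)--(C5) assumed. Because $\mathcal A_0=\mathcal B_0=\varnothing$, formula~\eqref{eq4} reduces literally to \eqref{eq9}, and on $\G$ it agrees with $H$. Therefore Theorem~\ref{thm:partitioned} gives that $a\in K\setminus\{0,1\}$ if and only if $V$ is a nullnorm with absorbing element $a$. Since $a\notin\{0,1\}$ is assumed throughout, the condition $a\in K$ is the same as $a\in K\setminus\{0,1\}$.

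Finally, I would prove the extra conclusion $V\in\mathcal V^{123}$. When $V$ is a nullnorm with absorbing element $a\in X\setminus\{0,1\}$, it lies in $\mathcal V$. Moreover $V(I_a^3\times I_a^3)=H(\G)\subseteq I_a^1\cup I_a^2\cup I_a^3$, which is exactly the defining condition of $\mathcal V^{123}$. For the reverse direction, if $V$ is a nullnorm lying in $\mathcal V^{123}$, then in particular it is a nullnorm, and Theorem~\ref{thm:construction} (or Proposition~\ref{propp14}) gives $a\in K$.

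The only delicate point is the well-definedness of the partition of $\G$ before $a\in K$ is known. If the disjointness $I_a^1\cap I_a^2=\varnothing$ is unavailable in general, I would interpret $\mathcal D_1,\mathcal D_2,\mathcal D_3$ simply as preimages that cover $\G$. Overlapping pieces still assign the common value $H(x,y)$, so \eqref{eq9} remains well defined regardless. Beyond that, the proof is bookkeeping on top of Theorem~\ref{thm:partitioned}.
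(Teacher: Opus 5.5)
Your proposal is correct and essentially matches the paper's proof. Since $H$ avoids $[0,a]\cup[a,1]$, formula~\eqref{eq9} is just formula~\eqref{eq2}; the paper cites Theorem~\ref{thm:construction} directly, and your detour through Theorem~\ref{thm:partitioned} with $\mathcal A_0=\mathcal B_0=\varnothing$ is only a relabeling. Membership $V\in\mathcal V^{123}$ then follows from $V(I_a^3\times I_a^3)=H(\G)\subseteq I_a^1\cup I_a^2\cup I_a^3$. Your points that only the covering of $\G$ (not disjointness of the $\mathcal D_i$) is needed, and that the converse comes from Proposition~\ref{propp14}, are correct and make explicit what the paper leaves implicit.
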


\begin{proof}
Since $H=Q_i$ on $\mathcal D_i$, formula~\eqref{eq9} is precisely
formula~\eqref{eq2}. Conditions~\textup{(C1)--(C5)} are those of
Theorem~\ref{thm:construction}; hence $V$ is a nullnorm with absorbing
element $a$. Moreover,
$
V(I_a^3\times I_a^3)=H(\G)
\subseteq I_a^1\cup I_a^2\cup I_a^3,
$
so $V\in\mathcal V^{123}$.
\end{proof}

\begin{theorem}
\label{thm:v123-representation}
Let $\mathbb P=(X,\unlhd,\wedge,\vee,0,1)$ be a bounded trellis,
fix $a\in X\setminus\{0,1\}$, and let $V:X^2\to X$ be a binary
operation. Then the following statements are equivalent:
\begin{enumerate}
\item[$\mathrm{(1)}$] $V\in\mathcal{V}^{123}$.
\item[$\mathrm{(2)}$] $a\in K$ and
there exist \(S,T,f,g,H\)
satisfying the hypotheses and conditions of
Theorem~\ref{thm:v123-construction} such that $V$ is given
by~\eqref{eq9}.
\end{enumerate}
\end{theorem}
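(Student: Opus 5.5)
The plan is to reduce both implications to the general representation, Theorem~\ref{thm:representation}, and the construction theorem for the subclass, Theorem~\ref{thm:v123-construction}. The only new ingredient is the range restriction on $H$, which corresponds exactly to the defining condition of $\mathcal V^{123}$.

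For $(1)\Rightarrow(2)$, let $V\in\mathcal V^{123}$. Then $V$ is a proper nullnorm with absorbing element $a$, so Theorem~\ref{thm:representation} yields $a\in K\setminus\{0,1\}$. It also yields the canonical components
\[
S=V|_{[0,a]^2},\qquad T=V|_{[a,1]^2},\qquad f(x)=V(x,1),\qquad g(x)=V(x,0),\qquad H=V|_{\G}.
\]
These satisfy all hypotheses of Theorem~\ref{thm:construction} together with (C1)--(C5), and $V$ is given by~\eqref{eq2}. By the definition of $\mathcal V^{123}$, we have $H(\G)=V(I_a^3\times I_a^3)\subseteq I_a^1\cup I_a^2\cup I_a^3$. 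Hence $H$ may be regarded as a commutative, increasing map $\G\to I_a^1\cup I_a^2\cup I_a^3$, as Theorem~\ref{thm:v123-construction} requires.

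It remains to check that~\eqref{eq2} and~\eqref{eq9} coincide. Define $\mathcal D_i=H^{-1}(I_a^i)$ and $Q_i=H|_{\mathcal D_i}$. Lemma~\ref{lem:regional-sets123} shows that $I_a^1,I_a^2,I_a^3$ are pairwise disjoint: $I_a^1\cap I_a^2=\varnothing$ was proved there, and $I_a^3$ is disjoint from both by definition. Together with the range restriction, this makes the $\mathcal D_i$ a partition of $\G$. Therefore the third branch of~\eqref{eq2} is exactly the union of the $Q_i$-branches of~\eqref{eq9}, so $V$ is given by~\eqref{eq9}.

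For $(2)\Rightarrow(1)$, suppose $a\in K$ and that components satisfying the hypotheses of Theorem~\ref{thm:v123-construction} produce $V$ via~\eqref{eq9}. Since $a\in X\setminus\{0,1\}$ is fixed, we have $a\in K\setminus\{0,1\}$. Theorem~\ref{thm:v123-construction} then gives directly that $V$ is a nullnorm with absorbing element $a$ and that $V\in\mathcal V^{123}$.

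The argument is essentially bookkeeping. The one point needing care is the claim that the codomain restriction loses no generality: a map $\G\to X$ with range inside $I_a^1\cup I_a^2\cup I_a^3$ satisfies (C1)--(C5) unchanged. This holds because $\Lambda$ in~\eqref{eq1} is defined on all of $\mathcal L_a$ independently of $H$'s codomain, so (C5) is literally the same condition. I would state this explicitly. I would also note that the well-definedness of the $\mathcal D_i$-partition relies on the disjointness from Lemma~\ref{lem:regional-sets123}, which needs $a\in K$; this is available in both directions.
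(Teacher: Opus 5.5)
Your proposal is correct and matches the paper's proof. For $(1)\Rightarrow(2)$ you apply Theorem~\ref{thm:representation} and note that the recovered $H=V|_{\G}$ has range in $I_a^1\cup I_a^2\cup I_a^3$, so its preimages $\mathcal D_i$ and restrictions $Q_i$ supply the components of Theorem~\ref{thm:v123-construction}; for $(2)\Rightarrow(1)$ you invoke Theorem~\ref{thm:v123-construction} directly. Your extra remarks on (C5) being unaffected by the codomain and on the disjointness of the $\mathcal D_i$ are harmless. The disjointness is not even needed, since every $Q_i$ is a restriction of the same $H$.
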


\begin{proof}
If $V\in\mathcal V^{123}$, apply
Theorem~\ref{thm:representation}. The recovered function
$H=V|_{\G}$ has range in $I_a^1\cup I_a^2\cup I_a^3$; its preimages
$\mathcal D_i=H^{-1}(I_a^i)$ and restrictions
$Q_i=H|_{\mathcal D_i}$ therefore give the components in
Theorem~\ref{thm:v123-construction}. Conversely, every operation
constructed in Theorem~\ref{thm:v123-construction} is a nullnorm and
its restriction to $I_a^3\times I_a^3$ has the required range. Hence
it belongs to $\mathcal V^{123}$.
\end{proof}


For any nullnorm $V$ represented by
Theorem~\ref{thm:representation}, the values of $V$ on
$X^2\setminus\G$ belong to $[0,a]\cup[a,1]$.
Consequently, membership in $\mathcal{V}_a$ is determined by the range
of $H$ on $\G$; more precisely,
\[
V\in\mathcal{V}_a
\quad\text{if and only if}\quad
H(\G)\subseteq[0,a]\cup[a,1]\cup I_a^3.
\]
Similarly,
\[
V\in\mathcal{V}_{cwa}
\quad\text{if and only if}\quad
H(\G)\subseteq[0,a]\cup[a,1].
\]

\begin{remark}
When $\mathbb{P}$ is a bounded lattice, the transitivity of its order
implies that $I_a^1=I_a^2=\varnothing$ and $I_a^3=I_a$.
Consequently, the range restriction in the definition of
$\mathcal{V}_a$ becomes vacuous, so that
$\mathcal{V}_a=\mathcal{V}$. Accordingly, our result recovers the
complete representation of nullnorms on bounded lattices established
in~\cite{Zhang2022}.
The subclasses $\mathcal V_{cwa}$ and $\mathcal V^{123}$ are
characterized, respectively, by
$
H(I_a\times I_a)\subseteq[0,a]\cup[a,1]
\ \text{and}\
H(I_a\times I_a)\subseteq I_a.
$
If $I_a\ne\varnothing$, then $I_a\times I_a$ is nonempty,
whereas
$
\bigl([0,a]\cup[a,1]\bigr)\cap I_a=\varnothing.
$
Hence no function on $I_a\times I_a$ can have its range contained in
both target sets, and therefore
$
\mathcal V_{cwa}\cap\mathcal V^{123}=\varnothing.
$
By contrast, if $I_a=\varnothing$, all the classes of nullnorms
considered above coincide.


\end{remark}

\section{Conclusion}\label{sec:conclusion}

This paper establishes a complete representation theorem for proper
nullnorms on bounded trellises without imposing an a priori range
restriction on $H:I_a^3\times I_a^3\to X$. For a fixed absorbing element \(a\),
the construction combines a t-conorm on $[0,a]$, a t-norm
on $[a,1]$, two order-preserving maps $f$ and $g$, and a
commutative, increasing function $H$.
The central device is the mixed
interaction function $\Lambda$, defined on pairs with at least one
component in $I_a^3$. It keeps the associativity identity well defined
even when an intermediate value of $H$ lies outside $I_a^3$ and thereby
allows the range of $H$ to remain unrestricted.

The special cases clarify how the representation depends on the size
of $I_a^3$. When $I_a^3=\varnothing$, no values of $H$ need to be
specified, and conditions~\textup{(C2)--(C5)} are vacuous. When
$I_a^3=\{c\}$, the admissible choices of $H(c,c)$ are described by the
set $E_c$, which parametrizes the resulting nullnorms for fixed $S$,
$T$, $f$, and $g$. The five-element lattice example shows that
condition~\textup{(C5)} does not follow from the component hypotheses
and conditions~\textup{(C1)--(C4)}. The fourteen-element nontransitive
example gives a nullnorm for which $H(I_a^3\times I_a^3)$ meets each
of $[0,a]$, $[a,1]$, $I_a^1$, $I_a^2$, and $I_a^3$, demonstrating the
need to admit the full range of $H$. The range-partition analysis also
clarifies the relation of the general framework to the subclasses
$\mathcal V_a$, $\mathcal V_{cwa}$, and $\mathcal V^{123}$ and to the
bounded-lattice case.

Several directions merit further investigation. First, intrinsic
criteria ensuring the existence of $S$, $T$, $f$, $g$  and $H$ for a prescribed
$a\in K$ would complement the present representation, since membership
in $K$ alone does not construct these components.
Second, the
parametrization by $E_c$ suggests extending the analysis to finite sets
$I_a^3$ and developing algorithms for deciding existence and
enumerating nullnorms on finite bounded trellises. Third, additional
properties, such as idempotency and distributivity, could be
characterized directly in terms of $S$, $T$, $f$, $g$, $H$  and
$\Lambda$.

\def\bibsection{\section*{\refname}}

\end{document}